\numberwithin{equation}{section}
\newtheorem{thm}{Theorem}[section]
\newtheorem{lma}[thm]{Lemma}
\newtheorem{cor}[thm]{Corollary}
\newtheorem{defn}[thm]{Definition}
\newtheorem{prop}[thm]{Proposition}
\newtheorem{conj}[thm]{Conjecture}
\newtheorem{rem}[thm]{Remark}
\newtheorem{example}[thm]{Example}
\renewcommand{\geq}{\geqslant}
\renewcommand{\leq}{\leqslant}
\renewcommand{\H}{\text{H}}
\title{ \vspace{-20mm}Distance sets, orthogonal projections, \\ and passing to weak tangents }
\author{Jonathan M. Fraser}
\begin{document}

\maketitle

\begin{abstract}
We consider the Assouad dimension analogues of two important problems in geometric measure theory. These problems are tied together by the common theme of `passing to weak tangents'.  First, we solve an analogue of \emph{Falconer's distance set problem} for Assouad dimension in the plane: if a planar set has Assouad dimension strictly greater than 1, then its distance set has Assouad dimension 1.  We also obtain partial results in higher dimensions. Second, we consider how Assouad dimension behaves under \emph{orthogonal projection}.  We extend the planar projection theorem of Fraser and Orponen to higher dimensions, provide estimates on the (Hausdorff) dimension of the exceptional set of projections, and provide a recipe for obtaining results about restricted families of projections.     We provide several illustrative examples throughout.
\\ \\ 
\emph{Mathematics Subject Classification} 2010: primary: 28A80; secondary: 28A78. 
\\
\emph{Key words and phrases}:  Assouad dimension, weak tangent, distance set, orthogonal projection, exceptional set, restricted families.
\end{abstract}

\date{}

\section{Introduction}

\subsection{Weak tangents and Assouad dimension}

The Assouad dimension is a fundamental notion in metric geometry, which goes back to Bouligand's 1928 paper  \cite{bouligand}.  It also played a role in Furstenberg's seminal work on galleries, which began in the 1960s, where it is referred to as the star dimension \cite{Furstenberg60s, Furstenberg}.  The notion  rose to prominence again in the 1970s through the work of Assouad which established powerful connections between the Assoaud dimension and embedding theory \cite{assouadphd}.

We begin by recalling the definition, but refer the reader to \cite{ Fraser, Luukkainen, Robinson} for more details. We consider subsets of $d$-dimensional Euclidean space $\mathbb{R}^d$ $(d \in \mathbb{N})$, but some of what we say holds in more general spaces.    For a non-empty bounded set $E \subset \mathbb{R}^d$ and $r>0$, let $N_r (E)$ be the smallest number of open sets with diameter less than or equal to $r$ required to cover $E$.  The \emph{Assouad dimension} of a non-empty set $F\subseteq \mathbb{R}^d$ is defined by
\begin{eqnarray*}
\dim_\text{A} F & = &  \inf \Bigg\{ \  s \geq 0 \  : \ \text{      $ (\exists \, C>0)$ $(\forall \, R>0)$ $(\forall \, r \in (0,R) )$ $(\forall \, x \in F )$ } \\ 
&\,& \hspace{45mm} \text{ $N_r\big( B(x,R) \cap F \big) \ \leq \ C \bigg(\frac{R}{r}\bigg)^s$ } \Bigg\}
\end{eqnarray*}
where $B(x,R)$ denotes the closed ball centred at $x$ with radius $R$. It is well-known that the Assouad dimension is always an upper bound for the Hausdorff dimension and (for bounded sets) the upper box dimension.  We write $\dim_\mathrm{H}$ for the Hausdorff dimension,  $\mathcal{H}^s$ for the $s$-dimensional Hausdorff (outer) measure, $\overline{\dim}_\mathrm{B}$ for the upper box dimension, and $\dim_\mathrm{P}$ for the packing  dimension.  For precise definitions and basic properties of these concepts, we refer the reader to \cite{falconer}.

One of the most effective ways to bound the Assouad dimension of a set from below is to use \emph{weak tangents}; an  approach  pioneered by Mackay and Tyson \cite{mackaytyson}. Weak tangents are tools for capturing the extremal local structure of a set.   Let $\mathcal{K}(\mathbb{R}^d)$ denote the set of all non-empty compact subsets of  $\mathbb{R}^d$.  This is a complete metric space when equipped with the Hausdorff metric $d_\mathcal{H}$ defined by
\[
d_\mathcal{H} (A,B) = \inf \{ \delta >0 \ : \   A \subseteq B_\delta \text{ and } B \subseteq A_\delta \}
\]
where, for any $C \in \mathcal{K}(\mathbb{R}^d)$,
\[
C_\delta \ = \ \{ x \in \mathbb{R}^d \ : \ | x- y | < \delta \text{ for some } y \in C \}
\]
denotes the open $\delta$-neighbourhood of $C$.  We will also consider the space $\mathcal{K}(X)$ for a fixed non-empty compact set $X \subseteq \mathbb{R}^d$.  This is the set of all non-empty compact subsets of  $X$ and, importantly, this is a compact subset of $ \mathcal{K}(\mathbb{R}^d)$.

\begin{defn}\label{weaktangentdef}
Let $X \in \mathcal{K}(\mathbb{R}^d)$ be a fixed reference set (usually the closed unit ball or square) and let $E, F \subseteq \mathbb{R}^d$ be closed sets with $E \subseteq X$.  Suppose there exists a sequence of similarity maps $T_k: \mathbb{R}^d \to \mathbb{R}^d$ such that $d_\mathcal{H} (E,T_k(F) \cap X ) \to 0$ as $k \to \infty$.  Then $E$   is called a \emph{weak tangent} to $F$.
\end{defn}

Recall that a similarity map $T: \mathbb{R}^d \to \mathbb{R}^d$ is a map of the form $x \mapsto c O(x) + t$ where $c >0$ is a scalar (called the \emph{similarity ratio}), $O \in \mathcal{O}(\mathbb{R},d)$ is a real orthogonal matrix and $t \in \mathbb{R}^d$ is a translation.  In most instances in this paper $O$ will be the identity matrix and $c$ will be large. The following important result of Mackay and Tyson will be used throughout the paper without being mentioned explicitly.

\begin{prop}{ \em \cite[Proposition 6.1.5]{mackaytyson}.}\label{weaktangent}
Let $F \subseteq \mathbb{R}^d$ be closed, $E \subseteq \mathbb{R}^d$ be compact, and suppose $E$ is a weak tangent to $F$.   Then $\dim_\mathrm{A} F \geq \dim_\mathrm{A} E$.
\end{prop}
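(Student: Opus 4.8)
The plan is to prove the Mackay--Tyson inequality by a direct counting argument: I want to show that any covering inefficiency of $E$ at a given scale is inherited, up to bounded multiplicative constants, by $F$ at a correspondingly rescaled level, so that the Assouad exponent controlling $F$ must dominate the one controlling $E$. Suppose for contradiction that $s := \dim_\mathrm{A} F < \dim_\mathrm{A} E$, and fix any $t$ with $s < t < \dim_\mathrm{A} E$. By definition of $\dim_\mathrm{A} F$ there is a constant $C>0$ so that $N_r(B(x,R)\cap F) \le C (R/r)^t$ for all admissible $x,R,r$. I aim to contradict the failure of the same bound (with exponent $t$) for $E$, which must fail for some sequence of scales since $t < \dim_\mathrm{A} E$.

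The heart of the matter is transporting covers along the approximating similarities. First I would fix a point $x_0 \in E$ and scales $0 < r < R$ witnessing that $E$ is badly coverable at exponent $t$; concretely, for each $n$ I choose $x_n \in E$ and $0 < r_n < R_n$ with $N_{r_n}(B(x_n,R_n)\cap E) > n (R_n/r_n)^t$. Next, using $d_\mathcal{H}(E, T_k(F)\cap X) \to 0$, I select $k$ large (depending on the chosen scales) so that $E$ and $T_k(F)\cap X$ are within Hausdorff distance $\eta$ for $\eta$ small relative to $r_n$. Writing the similarity ratio of $T_k$ as $c_k$, a cover of $T_k(F)\cap X$ by sets of diameter $\le r_n$ pulls back under $T_k^{-1}$ to a cover of the relevant portion of $F$ by sets of diameter $\le r_n/c_k$, and the ball $B(x_n,R_n)$ pulls back to (essentially) a ball of radius $R_n/c_k$. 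The key quantitative step is the standard comparison lemma: if $d_\mathcal{H}(A,B) \le \eta$ then covering numbers at scale $r$ are comparable, precisely $N_r(A) \le N_{r-2\eta}(B)$ or a similar inequality, once $\eta$ is a small fraction of $r$. Combining these, a cover of $B(T_k^{-1}(x_n), R_n/c_k)\cap F$ by sets of diameter $\le r_n/c_k$ yields, after pushing forward and thickening, a cover of $B(x_n, R_n)\cap E$ by sets of diameter $\le r_n$, so
\[
N_{r_n}\big(B(x_n,R_n)\cap E\big) \ \le \ C' \, N_{r_n/c_k}\big(B(T_k^{-1}(x_n),R_n/c_k)\cap F\big)
\]
for an absolute constant $C'$ absorbing the thickening and boundary effects.

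Applying the $\dim_\mathrm{A} F$ bound to the right-hand side gives $N_{r_n/c_k}(\cdots) \le C (R_n/r_n)^t$, since the ratio of the two scales $R_n/c_k$ and $r_n/c_k$ is exactly $R_n/r_n$; the rescaling is scale-invariant, which is precisely why similarities (rather than arbitrary contractions) are the right maps. This sandwiches $N_{r_n}(B(x_n,R_n)\cap E) \le C C' (R_n/r_n)^t$, contradicting the choice $N_{r_n}(B(x_n,R_n)\cap E) > n(R_n/r_n)^t$ once $n > C C'$. I expect the main obstacle to be the bookkeeping of boundary effects at two places: (i) ensuring that the balls $B(x_n,R_n)\cap E$ and their preimages $B(T_k^{-1}(x_n),R_n/c_k)\cap F$ really do correspond under $T_k$ up to a controlled enlargement of the radius, since the intersection with the reference set $X$ and the Hausdorff approximation can shave or add a thin shell; and (ii) choosing the approximation error $\eta$ uniformly small relative to $r_n$ simultaneously with letting $n\to\infty$, which requires the order of quantifiers (first fix the scales, then choose $k$) to be handled carefully so that $C$ and $C'$ remain independent of $n$. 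Neither obstacle is deep, but both demand care so that the constants do not secretly depend on the scale, which would vitiate the exponent comparison.
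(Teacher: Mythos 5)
Your argument is correct, and it is worth noting that the paper itself offers no proof of this proposition at all: it is quoted directly from Mackay and Tyson \cite[Proposition 6.1.5]{mackaytyson}, so there is no internal argument to compare against. What you have written is essentially the standard proof of that cited result: negate the inequality, extract scales $x_n, r_n, R_n$ witnessing the failure of the exponent-$t$ covering bound for $E$, choose $k$ so that the Hausdorff error $\eta$ is a small fraction of $r_n$ (the order of quantifiers you insist on --- scales first, then $k$ --- is exactly right and is the crux of keeping constants uniform in $n$), transport covers through the similarity $T_k$ using scale invariance of the ratio $R/r$, and contradict the bound coming from $\dim_\mathrm{A} F$. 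Two of the bookkeeping points you flag deserve to be made explicit when writing this up. First, the paper's definition of Assouad dimension quantifies over ball centres $x \in F$, whereas your displayed inequality uses the centre $T_k^{-1}(x_n)$, which need not lie in $F$; you must recentre at $T_k^{-1}(w_n)$ where $w_n \in T_k(F) \cap X$ satisfies $|x_n - w_n| \le \eta$, at the cost of enlarging the radius to $(R_n + 2\eta)/c_k$. Second, the comparison lemma should be applied in the form
\[
N_{r_n}\big(B(x_n,R_n)\cap E\big) \ \le \ N_{r_n - 2\eta}\big(B(w_n, R_n+2\eta)\cap T_k(F)\big)
\ = \ N_{(r_n-2\eta)/c_k}\big(B(T_k^{-1}(w_n),(R_n+2\eta)/c_k)\cap F\big),
\]
after which the choice $\eta \le r_n/4$ gives $(R_n+2\eta)/(r_n-2\eta) \le 3R_n/r_n$, so the Assouad bound for $F$ yields $N_{r_n}(B(x_n,R_n)\cap E) \le C\,3^{d}(R_n/r_n)^t$, contradicting your lower bound once $n > C\,3^d$. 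With these two repairs --- both squarely within the obstacles you identified --- the proof is complete and sound.
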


It turns out that one can actually achieve the Assouad dimension as the Hausdorff dimension of a weak tangent.  This beautiful fact, which is essentially due to Furstenberg, will be the key technical tool in this paper and will  allow us to pass various geometrical problems to weak tangents. This approach provides a mechanism for finding `Assouad dimension analogues' of known results concerning  the Hausdorff dimension (and other dimensions) as well as proving stronger results, not known for other dimensions.  Thus we will obtain some fundamental results about the Assouad dimension via simple direct arguments. 

\begin{thm}[Furstenberg, K\"aenm\"aki-Ojala-Rossi] \label{goodweaktangent}
Let $F \subseteq \mathbb{R}^d$ be closed and non-empty with  $\dim_\mathrm{A} F = s \in [0,d]$.  Then there exists a compact set $E \subseteq \mathbb{R}^d$ with $\mathcal{H}^s(E) > 0$ which is a weak tangent to $F$.  In particular, $\dim_\mathrm{H} E = \dim_\mathrm{A} E =  s$.
\end{thm}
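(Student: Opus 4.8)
\emph{The plan.} The strategy is to manufacture the weak tangent $E$ as a Hausdorff limit of rescalings of $F$ centred on balls that nearly witness the Assouad dimension, and then to equip it with an $s$-Frostman measure via a compactness argument in the space of probability measures; the lower bound $\mathcal{H}^s(E)>0$ is then read off from the mass distribution principle. First I would use the definition of $\dim_{\mathrm{A}} F = s$ to extract good scales: since no exponent below $s$ admits a valid covering constant, for each $n \in \mathbb{N}$ there exist $x_n \in F$ and $0 < r_n \le R_n$ with
\[
N_{r_n}\big( B(x_n,R_n) \cap F \big) \ \ge \ n \left( \frac{R_n}{r_n} \right)^{s - 1/n}.
\]
Because $N_{r_n}(B(x_n,R_n)\cap F) \le C_d (R_n/r_n)^{d}$, the growing constant $n$ forces $R_n/r_n \to \infty$. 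Normalising by the similarity $T_n(y) = (y-x_n)/R_n$ and writing $\rho_n = r_n/R_n \to 0$, the sets $F_n := T_n(F) \cap X$ (with $X$ the closed unit ball) lie in $\mathcal{K}(X)$, contain the origin, satisfy $N_{\rho_n}(F_n) \ge \rho_n^{-(s-1/n)}$, and — since similarities preserve covering ratios and the Assouad bound transfers — obey $N_{\rho}(B(y,\sigma)\cap F_n) \le C_{s'}(\sigma/\rho)^{s'}$ at all intermediate scales for every $s'>s$. As $\mathcal{K}(X)$ is compact in $d_{\mathcal{H}}$, a subsequence of $(F_n)$ converges to some $E \in \mathcal{K}(X)$; by Definition~\ref{weaktangentdef}, $E$ is a weak tangent to $F$, so Proposition~\ref{weaktangent} already gives $\dim_{\mathrm{A}} E \le \dim_{\mathrm{A}} F = s$ and hence $\dim_{\mathrm{H}} E \le s$.

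To produce positive measure I would place a natural probability measure $\mu_n$ on $F_n$ — the normalised counting measure on a maximal $\rho_n$-separated subset $P_n$ is the obvious candidate — and pass, along a further subsequence, to a weak-$*$ limit $\mu$ on the (weak-$*$ compact) space of Borel probability measures on $X$. The Hausdorff convergence $F_n \to E$ confines $\operatorname{supp}\mu$ to $E$, and since $\mu$ is a probability measure $\mu(E)>0$. The mass distribution principle then reduces everything to a single uniform estimate: an $s$-Frostman bound $\mu_n(B(y,\rho)) \le C\rho^s$ for all $y$ and all $\rho \in [\rho_n,1]$, with $C$ independent of $n$. Taking limits this passes to $\mu$ and yields $\mathcal{H}^s(E) \gtrsim \mu(E) > 0$, whence $\dim_{\mathrm{H}} E \ge s$; combined with the reverse inequality from the first paragraph this forces $\dim_{\mathrm{H}} E = \dim_{\mathrm{A}} E = s$.

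\emph{The main obstacle.} The delicate point is exactly this uniform $s$-Frostman estimate. Counting the separated points of $P_n$ in a ball via the Assouad upper bound controls $\mu_n(B(y,\rho))$ only by $C_{s'}\,\rho^{s'}\,\rho_n^{\,s-s'-1/n}$ for $s'>s$, and since the lower bound on the total mass carries the near-optimal exponent $s-1/n$ rather than $s'$, the factor $\rho_n^{\,s-s'-1/n}$ can blow up when $R_n/r_n$ grows too quickly. Closing this gap is the technical heart of the theorem and is where Furstenberg's ideas enter: rather than rescaling a single ball one must pass to a sub-configuration whose branching is \emph{uniformly regular} across all the intermediate dyadic scales between $r_n$ and $R_n$ — extracted by a pigeonholing argument over those scales — so that the lower and upper covering exponents match and tend to $s$ simultaneously. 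On such a regularised configuration the counting measures are comparable to Ahlfors $s$-regular measures with uniform constants, the Frostman bound holds at exponent exactly $s$, and the limit $\mu$ witnesses $\mathcal{H}^s(E)>0$.
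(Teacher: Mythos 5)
First, a point of context: the paper does not actually prove Theorem \ref{goodweaktangent}. It imports the result, citing Furstenberg \cite{Furstenberg} and K\"aenm\"aki--Ojala--Rossi \cite[Propositions 5.7--5.8]{kaenmakiassouad2} (with Bishop--Peres \cite[Lemma 2.4.4]{bishopperes} as an antecedent), and merely remarks that an inspection of the K\"aenm\"aki--Ojala--Rossi proof yields $\mathcal{H}^s(E)>0$ and extends to general $d$. So your proposal has to be measured against those cited proofs, and in architecture it does match them: rescale near-extremal balls, use compactness of $\mathcal{K}(X)$ and of the space of probability measures, and run a Frostman/mass-distribution argument on a limit measure supported in a weak tangent. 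Your extraction of the scales $x_n,r_n,R_n$, the deduction $R_n/r_n\to\infty$, the containment $\operatorname{supp}\mu\subseteq E$, and the observation that Frostman exponents $s-\epsilon_n\to s$ with a uniform constant suffice in the limit (via portmanteau and the mass distribution principle) are all correct. You have also correctly diagnosed why the naive counting measure fails: the upper bound available at exponent $s'>s$ and the lower bound at exponent $s-1/n$ do not match, producing the divergent factor $\rho_n^{\,s-1/n-s'}$.

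The genuine gap is that the step you relegate to a closing sentence --- extracting a sub-configuration that is ``uniformly regular across all intermediate dyadic scales'' --- \emph{is} the theorem; it is not a technical patch but the entire content, and your sketch of it does not work as stated. If you regularise the dyadic tree between scales $R_n$ and $r_n$ level by level (pigeonholing nodes by branching number and keeping the dominant class), you lose a bounded multiplicative factor at each of the $m_n\approx\log_2(R_n/r_n)$ levels; a loss of $C$ per level compounds to $C^{m_n}=(R_n/r_n)^{\log_2 C}$, i.e.\ a \emph{fixed} deficit in the Frostman exponent that does not vanish as $n\to\infty$, so you would only reach exponent $s-\log_2 C-o(1)$. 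The known combinatorial arguments avoid this by pigeonholing over \emph{blocks} of $q_n$ consecutive dyadic levels with $q_n\to\infty$ slowly, so that the per-block loss (roughly $\log(dq_n)/q_n$ in the exponent) vanishes; Furstenberg instead uses ergodic-theoretic machinery (CP-distributions and a variational principle) rather than direct counting. Moreover, the regularisation must invoke the Assouad upper bound \emph{hereditarily inside the pigeonholing} (a single-scale lower count is compatible with all the mass concentrating in one tiny sub-ball, which destroys any Frostman bound), and what comes out is a Frostman exponent $s-\epsilon_n$ with constants uniform in $n$ --- not, as you assert, measures ``comparable to Ahlfors $s$-regular measures'' at exponent exactly $s$. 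That weaker conclusion is exactly what the limiting argument needs, but the overstatement hides precisely the bookkeeping (balancing $s'\downarrow s$, $\epsilon_n\downarrow 0$, $q_n\uparrow\infty$) that constitutes the proof. As written, then, your proposal is a correct plan whose core lemma is asserted rather than proved.
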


This result essentially  appears in \cite[Section 5]{Furstenberg}, without the conclusion that the weak tangent has positive $\mathcal{H}^s$ measure and using  different terminology: weak tangents are replaced by `microsets' and Assouad dimension by `star dimension', $\dim^*$.    In fact the ideas go back further to Furstenberg's work in the 1960s, see \cite{Furstenberg60s}.  For an explicit explanation of the equivalence of Furstenberg's setting and our setting, see \cite[Corollary 5.2]{kaenmakiassouad1}.  The theorem also appears using our terminology in \cite[Propositions 5.7-5.8]{kaenmakiassouad2}.  K\"aenm\"aki-Ojala-Rossi give a direct and transparent proof that if $d=2$, then there exists a weak tangent $E$ with $\dim_\mathrm{H} E = s$  and an inspection of their proof in fact yields  $\mathcal{H}^s(E) > 0$.  Moreover, their proof easily extends to the case of general $d$.  The argument presented in the proof of \cite[Propositions 5.7]{kaenmakiassouad2} is a development of an argument of Bishop-Peres \cite[Lemma 2.4.4]{bishopperes} which showed that in the case $d=1$ there exists a weak tangent with Hausdorff dimension equal to the upper box dimension of the original set.   Also, \cite[Theorem 2.4]{patches} contains the above result in the case when $s=d$, i.e. for sets with full Assouad dimension.  In this case one can actually find a weak tangent with  non-empty interior (which is stronger than positive $\mathcal{H}^d$ measure). 

The term \emph{weak tangent} is used in place of \emph{tangent} for two reasons: there is no special point in $F$ at which we `zoom-in', and the similarity ratios $c$ associated with the maps $T_k$ need not `blow-up'.   We reserve the word `tangent' for weak tangents obtained by actually zooming in at a fixed point in the set.  More precisely,  we call $E$ from the above definition a \emph{tangent} to $F$ if the following additional assumptions are satisfied: $X=B(0,1)$ (the closed unit ball);  the sequence of similarity maps $T_k$ satisfies $T_k(x_0) = 0$ for all $k$ and a fixed point $x_0 \in F$ (i.e. we zoom-in to the point $x_0 \in F$); and $c_k \nearrow \infty$ as $k \to \infty$ (i.e. we actually zoom-in).  We end this section with the observation that weak tangents are really needed to effectively study the Assouad dimension and that the more direct notion of \emph{tangent}  is not sufficient. 
\begin{example} \label{notangentexample}
There exists a compact set $F \subset [0,1]$ with $\dim_\mathrm{A} F =  1$, but such that all tangents to $F$ have Assouad dimension equal to 0.
\end{example}

We will provide the details of this example in Section \ref{notangentexampleproof}.

\begin{rem}
We recently learned that a similar example appears in \cite[Example 2.20]{rajala}.
\end{rem}

\section{Results}

\subsection{Distance sets}

Given a set $F \subseteq \mathbb{R}^d$, the distance set of $F$ is defined by
\[
D(F) = \{ |x-y| \  :  \ x,y \in F\}.
\]
 The distance set problem, which stems from the seminal paper of Falconer \cite{distancesets}, is to relate the dimensions of $F$ with the dimensions of $D(F)$.  \emph{Falconer's distance set conjecture} refers to several related conjectures, one version of which is as follows:
\begin{conj}[Falconer] \label{falconerconjecture}
Let $d \geq2$ and $F \subseteq \mathbb{R}^d$ be an analytic set.  If  $\dim_\mathrm{H} F  > d/2$, then  $D(F)$ has positive Lebesgue measure (or even non-empty interior).  In particular, it also has full Hausdorff dimension.
\end{conj}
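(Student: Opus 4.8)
The plan is to follow Falconer's original measure-theoretic strategy, which reduces the problem to a Fourier-analytic estimate on spherical averages. First I would invoke Frostman's lemma: since $\dim_{\mathrm{H}} F > d/2$, there is a compactly supported Radon probability measure $\mu$ on $F$ and an exponent $s > d/2$ with finite $s$-energy
\[
I_s(\mu) \ = \ \iint |x-y|^{-s} \, d\mu(x) \, d\mu(y) \ = \ c(s,d) \int |\hat{\mu}(\xi)|^2 |\xi|^{s-d} \, d\xi \ < \ \infty .
\]
The idea is then to push $\mu \times \mu$ forward under the distance map $(x,y) \mapsto |x-y|$ to obtain a \emph{distance measure} $\nu$ supported on $D(F)$, and to prove that $\nu$ is absolutely continuous with an $L^2$ density. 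This immediately yields that $D(F)$ has positive Lebesgue measure; upgrading to non-empty interior would instead require showing the density is continuous, a strictly harder conclusion.

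The central step is Mattila's integral formula. Writing the $L^2$ norm of a mollified $\nu$ in frequency space, one finds that absolute continuity with $L^2$ density follows from the finiteness of
\[
\int_1^\infty \left( \int_{S^{d-1}} |\hat{\mu}(t\omega)|^2 \, d\sigma(\omega) \right)^2 t^{d-1} \, dt ,
\]
where $\sigma$ is the normalised surface measure on the unit sphere $S^{d-1}$. Thus everything is governed by the decay of the spherical averages $\mathcal{S}_\mu(t) = \int_{S^{d-1}} |\hat{\mu}(t\omega)|^2 \, d\sigma(\omega)$ as $t \to \infty$. Converting the energy formula to polar coordinates gives $\int_0^\infty \mathcal{S}_\mu(t) \, t^{s-1} \, dt \approx I_s(\mu) < \infty$, so that a decay $\mathcal{S}_\mu(t) \sim t^{-s}$ makes the Mattila integral $\int_1^\infty t^{-2s+d-1} \, dt$ converge precisely when $s > d/2$ — which is exactly the threshold in the conjecture.

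The hard part, and the reason this remains a conjecture, is that the energy bound only delivers averaged ($L^1$-type) control $\int \mathcal{S}_\mu(t)\, t^{s-1}\,dt < \infty$, whereas the Mattila integral demands the genuinely stronger $L^2$-type control $\int \mathcal{S}_\mu(t)^2 \, t^{d-1} \, dt < \infty$. Bridging this gap requires sharp pointwise or square-averaged decay for $\mathcal{S}_\mu(t)$, which is governed by Fourier restriction/extension theory for the sphere (Stein--Tomas and its refinements) and, more recently, by decoupling and the polynomial method. None of these inputs is currently known to be strong enough to reach the $d/2$ threshold in any dimension: in the plane Guth--Iosevich--Ou--Wang reach only $\dim_{\mathrm{H}} F > 5/4$, and the higher-dimensional results of Du--Zhang and others give thresholds strictly above $d/2$. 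I therefore expect the genuine obstacle to be exactly this harmonic-analytic estimate; the measure-theoretic skeleton (Frostman measure, distance measure, Mattila integral) is robust, but closing the gap between $d/2$ and the achievable exponents is the open problem itself. This is precisely the motivation for the present paper's strategy of passing instead to the Assouad dimension, where the analogous statement can be obtained by elementary weak-tangent arguments.
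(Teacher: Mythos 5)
The statement you were given is not a theorem of the paper: it is Falconer's distance set conjecture, quoted as Conjecture~\ref{falconerconjecture} purely for motivation, and the paper explicitly notes that it is ``still wide open, even in the plane.'' There is therefore no proof in the paper to compare against, and you were right not to manufacture one. Your account of the standard attack is accurate: Frostman's lemma yields a compactly supported measure $\mu$ on $F$ with $I_s(\mu) < \infty$ for some $s > d/2$; the pushforward of $\mu \times \mu$ under $(x,y) \mapsto |x-y|$ is a measure on $D(F)$; and Mattila's criterion reduces absolute continuity with $L^2$ density to the finiteness of
\[
\int_1^\infty \left( \int_{S^{d-1}} |\hat{\mu}(t\omega)|^2 \, d\sigma(\omega) \right)^2 t^{d-1} \, dt .
\]
Your heuristic --- that pointwise decay $\mathcal{S}_\mu(t) \sim t^{-s}$ of the spherical averages would make this integral converge exactly when $s > d/2$ --- is the correct explanation of why $d/2$ is the conjectured threshold, and your observation that positive Lebesgue measure needs only an $L^2$ density while non-empty interior needs a continuous density is also correct.

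The gap you name is the genuine one, and it is the open problem itself: the energy hypothesis controls only the weighted average $\int_0^\infty \mathcal{S}_\mu(t)\, t^{s-1}\, dt$, and no currently known restriction, decoupling, or polynomial-method input upgrades this to the square-integral Mattila requires at the $d/2$ threshold; the best known exponents ($5/4$ in the plane, and thresholds strictly above $d/2$ in higher dimensions) fall short in every dimension. So your proposal should be read as a correct diagnosis of why the conjecture is open rather than as a proof attempt with a repairable flaw. You also correctly identify the paper's actual response to this impasse: replace Hausdorff dimension by Assouad dimension and pass to weak tangents, which yields the complete planar result of Theorem~\ref{distanceset}, the partial higher-dimensional bounds of Theorem~\ref{distanceset2} (which import the Falconer--Erdo\u{g}an Hausdorff-dimension bounds wholesale via Lemma~\ref{disttangent}), and the corollary that $\dim_\mathrm{H} F > 1$ forces $D(F)$ to have a weak tangent with non-empty interior --- a weak-tangent shadow of the conjecture's ``non-empty interior'' conclusion.
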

There are numerous partial results in support of this conjecture, but the problem is still wide open, even in the plane.  One may replace the Hausdorff dimension with a different notion of dimension, such as the packing, box-counting or Assouad, and obtain a different conjecture. 
\begin{conj}  \label{distanceconj}
Let $\dim$ denote the Hausdorff, packing, upper or lower box or Assouad dimension, let $d \geq2$ and $F \subseteq \mathbb{R}^d$ be a bounded analytic set.  If $\dim F  > d/2$, then  $\dim D(F) = 1$.
\end{conj}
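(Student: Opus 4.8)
The plan is to establish the provable portion of Conjecture \ref{distanceconj}, namely the Assouad-dimension case in the plane: a bounded set $F \subseteq \mathbb{R}^2$ with $\dim_\mathrm{A} F > 1$ satisfies $\dim_\mathrm{A} D(F) = 1$, with only partial results in dimensions $d \geq 3$. Since $D(F)$ is a bounded subset of $\mathbb{R}$ we always have $\dim_\mathrm{A} D(F) \leq 1$, so the entire content is the lower bound $\dim_\mathrm{A} D(F) \geq 1$. My strategy is to exploit the paper's theme and \emph{pass the problem to a weak tangent}. Writing $s = \dim_\mathrm{A} F > 1$, Theorem \ref{goodweaktangent} supplies a compact weak tangent $E$ to $F$ with $\mathcal{H}^s(E) > 0$ and $\dim_\mathrm{H} E = \dim_\mathrm{A} E = s$. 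Thus the arbitrary set $F$ is replaced by a highly structured set $E$ of positive $\mathcal{H}^s$ measure, with $s$ strictly above the critical exponent $d/2 = 1$.

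The next step is a lemma asserting that \emph{distance sets pass to weak tangents}: if $E$ is a weak tangent to $F$ realised by similarities $T_k$ of ratio $c_k$ with $d_\mathcal{H}(E, T_k(F) \cap X) \to 0$, then $\dim_\mathrm{A} D(F) \geq \dim_\mathrm{A} D(E)$. The point is that a similarity of ratio $c_k$ scales all distances by $c_k$, so $D(T_k(F) \cap X) = c_k\, D\big(F \cap T_k^{-1}(X)\big) \subseteq c_k\, D(F)$, while the map $A \mapsto D(A)$ is continuous on $(\mathcal{K}(X), d_\mathcal{H})$ (indeed $d_\mathcal{H}(D(A), D(B)) \leq 2\, d_\mathcal{H}(A,B)$ by the triangle inequality). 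Hence $D(T_k(F) \cap X) \to D(E)$, and each term lies inside $c_k D(F) \cap [0, \diam X]$. Applying the one-dimensional similarities $t \mapsto c_k t$ to $D(F)$ and passing to a convergent subsequence yields a weak tangent $E'$ to $D(F)$ with $D(E) \subseteq E'$, since subset relations survive Hausdorff limits. Proposition \ref{weaktangent} together with monotonicity of $\dim_\mathrm{A}$ then gives $\dim_\mathrm{A} D(F) \geq \dim_\mathrm{A} E' \geq \dim_\mathrm{A} D(E)$.

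It therefore remains to show $\dim_\mathrm{A} D(E) = 1$ for the weak tangent $E$, and here the critical exponent is met exactly. The naive route — proving $D(E)$ has positive Lebesgue measure — is forbidden, since for a planar set of dimension just above $1$ this is precisely the open Falconer distance set conjecture. The resolution, and the reason the Assouad statement is accessible while the Hausdorff one is not, is that one needs only $D(E)$ to have \emph{full box-counting or packing dimension}: as $\dim_\mathrm{A} \geq \overline{\dim}_\mathrm{B} \geq \dim_\mathrm{P}$ for bounded sets, establishing $\overline{\dim}_\mathrm{B} D(E) = 1$ (or $\dim_\mathrm{P} D(E) = 1$) already forces $\dim_\mathrm{A} D(E) = 1$, and full box or packing dimension of a distance set is far weaker than positive measure. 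Concretely, I would arrange $E$ to be Ahlfors--David regular of dimension $> 1$ — either by upgrading the good weak tangent, or by extracting enough regularity from its positive and finite $\mathcal{H}^s$ measure via a density-point and further-tangent argument — and then invoke the distance-set theorem for regular planar sets of dimension at least $1$, which yields a distance set of full box-counting dimension. Combined with the previous paragraph, this closes the plane case.

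The main obstacle is exactly this last step, i.e. obtaining the sharp threshold $s > 1$. Everything upstream is soft: the existence of the good weak tangent (Theorem \ref{goodweaktangent}) and the stability of distance sets under Hausdorff limits. The distance-set input at the critical exponent is the genuinely hard ingredient, and it is also what confines the higher-dimensional analogue to partial results, since the corresponding regular-set distance estimates are weaker or unavailable when $d > 2$. For $d \geq 3$ I would run the identical weak-tangent reduction — so that only the final geometric input changes with the dimension — and state the conclusion in whatever range the available distance estimates for structured sets permit.
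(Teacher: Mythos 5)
Your reduction is exactly the one the paper uses to prove the Assouad case of this conjecture (Theorem \ref{distanceset}): pass to a weak tangent $E$ with $\mathcal{H}^s(E)>0$ via Theorem \ref{goodweaktangent}, and transfer the problem through the inequality $\dim_\mathrm{A} D(F) \geq \dim_\mathrm{A} D(E)$, which is the paper's Lemma \ref{disttangent}. Your sketch of that lemma (the map $A \mapsto D(A)$ is $2$-Lipschitz for $d_\mathcal{H}$, the sets $D(T_k(F)\cap X)$ sit inside $c_k D(F)\cap[0,\diam X]$, and inclusions survive Hausdorff limits) is correct and, if anything, tidier than the paper's pointwise convergence argument. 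You also correctly identify why the Assouad statement is accessible at the sharp threshold: one only needs full box or packing dimension of $D(E)$, not positive measure. Where you part company with the paper is the final step: the paper does \emph{not} go through Ahlfors--David regularity. It places the normalised measure $\nu = \mathcal{H}^s\vert_{E'}/\mathcal{H}^s(E')$ on a compact subset $E'\subseteq E$ of positive finite measure, uses the Hochman--Shmerkin CP-chain machinery \cite{HochmanShmerkin} to produce a micromeasure $\nu'$ with $\dim_\mathrm{H}\nu' \geq s>1$ generating an ergodic CP-chain (hence supported on a set of positive length), shows via Lemma \ref{startang} that $\mathrm{supp}(\nu')$ lies inside a further weak tangent, and then applies the Ferguson--Fraser--Sahlsten theorem (Theorem \ref{FFSthm}) to conclude $\dim_\mathrm{H} D(\mathrm{supp}(\nu')) \geq \min\{1,\dim_\mathrm{H}\nu'\} = 1$.

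The gap in your version is precisely the step you describe as ``arrange $E$ to be Ahlfors--David regular of dimension $>1$'', and neither of your two suggested mechanisms achieves it. Theorem \ref{goodweaktangent} cannot be ``upgraded'': positive $\mathcal{H}^s$ measure carries no lower regularity, and weak tangents of such sets are not AD-regular in general. Density-point arguments yield only the \emph{upper} bound $\mathcal{H}^s(B(x,r)\cap E)\leq Cr^s$ on a subset; the uniform \emph{lower} bound at all points and all scales is the hard half, and in general a set with $0<\mathcal{H}^s(E)<\infty$ contains no $s$-regular subset at all --- only $t$-regular subsets for $t<s$. What rescues your route is a genuine external theorem (Mattila and Saaranen, \emph{Ann. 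Acad. Sci. Fenn. Math.} \textbf{34} (2009), 487--502): every closed set of positive $\mathcal{H}^s$ measure contains a compact $t$-regular subset for every $0<t<s$. Since $s>1$ strictly, you may take $t\in[1,s)$, apply Orponen's theorem \cite{orponenAD} to the regular subset $A\subseteq E$ to get $\dim_\mathrm{P} D(A)=1$, and finish by monotonicity:
\[
\dim_\mathrm{A} D(F) \ \geq \ \dim_\mathrm{A} D(E) \ \geq \ \dim_\mathrm{A} D(A) \ \geq \ \dim_\mathrm{P} D(A) \ = \ 1.
\]
So your outline can be closed, and it gives a genuinely different (and arguably more elementary) final step than the paper's CP-chain argument; but as written the decisive regularity claim is unsupported, and the two justifications you offer for it would fail.
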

To the best of our knowledge, the different versions of this conjecture are all open for all values of $d \geq 2$. See \cite[Conjecture 4.5]{MattilaFourier} and the subsequent  discussion related to the Hausdorff dimension version.  An important recent result of Orponen \cite{orponenAD} is that an AD-regular set in the plane with dimension at least 1 has a distance set with full \emph{packing} dimension.  Shmerkin later proved that a Borel set in the plane with equal Hausdorff and packing dimensions strictly greater than 1 has a distance set with full Hausdorff dimension, see \cite{pablo, pablo2}.   Since these results require assumptions about more than one dimension, they do not resolve Conjecture \ref{distanceconj} for any particular dimension in isolation. Our main result resolves Conjecture \ref{distanceconj} for the Assouad dimension in the case $d=2$. Note that the analyticity assumption is not needed.

\begin{thm} \label{distanceset}
If $F \subseteq \mathbb{R}^2$ is any set with $\dim_\mathrm{A} F >1$, then  $\dim_\text{\emph{A}} D( F)  = 1.$
\end{thm}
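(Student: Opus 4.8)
Since $D(F)$ is a subset of the real line, the bound $\dim_\mathrm{A} D(F) \le 1$ is immediate and the entire content is the lower bound $\dim_\mathrm{A} D(F) \ge 1$. As the Assouad dimension is unchanged under taking closures and $D(\overline{F})$ has the same closure as $D(F)$, I may assume $F$ is closed. Writing $s = \dim_\mathrm{A} F > 1$, the plan begins by applying Theorem \ref{goodweaktangent} to produce a compact weak tangent $E \subseteq X$ to $F$ with $\mathcal{H}^s(E) > 0$; in particular $\dim_\mathrm{H} E = s > 1$ and $E$ supports a nontrivial measure at the critical exponent. The strategy is to push this distinguished tangent through the distance-set map and then to exploit the largeness of $E$.

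The transfer step is the heart of the ``passing to weak tangents'' philosophy and is entirely clean. Let $T_k$ be the similarities realising $E$ as a weak tangent, with ratios $c_k$, so that $d_\mathcal{H}(E, T_k(F) \cap X) \to 0$. Since a similarity of ratio $c$ scales every distance by $c$, one has $D(T_k(F)) = c_k D(F)$ exactly, independently of the rotational and translational parts of $T_k$. Consider the similarities $S_k \colon \mathbb{R} \to \mathbb{R}$, $S_k(t) = c_k t$, and the one-dimensional reference set $X' = [0, \diam X]$. Every distance realised inside $T_k(F) \cap X$ lies in $D(T_k(F)) \cap [0, \diam X] = S_k(D(F)) \cap X'$, so $D(T_k(F) \cap X) \subseteq S_k(D(F)) \cap X'$. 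The map $A \mapsto D(A)$ is continuous on $(\mathcal{K}(\mathbb{R}^2), d_\mathcal{H})$, being the image of the compact set $A \times A$ under the uniformly continuous map $(x,y) \mapsto |x-y|$, and hence $D(T_k(F) \cap X) \to D(E)$. By compactness of $\mathcal{K}(X')$ I may pass to a subsequence along which $S_k(D(F)) \cap X'$ converges to some $G \in \mathcal{K}(X')$; by definition $G$ is a weak tangent to $\overline{D(F)}$, and taking Hausdorff limits in the displayed inclusion gives $D(E) \subseteq G$. Proposition \ref{weaktangent} together with monotonicity of the Assouad dimension then yields
\[
\dim_\mathrm{A} D(F) \ \ge \ \dim_\mathrm{A} G \ \ge \ \dim_\mathrm{A} D(E),
\]
so the whole problem reduces to the single inequality $\dim_\mathrm{A} D(E) \ge 1$ for the tangent $E$.

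It remains to show that the distance set of $E$ is as large as possible, and this is the step I expect to be the main obstacle, as it is the only point where a genuine planar distance-set estimate must be invoked. Because $\dim_\mathrm{A} \ge \dim_\mathrm{P} \ge \dim_\mathrm{H}$ and $\dim_\mathrm{A} \ge \overline{\dim}_\mathrm{B}$, it suffices to prove that $D(E)$ has full dimension in any one of these weaker senses. The crucial observation is that $E$ is far from arbitrary: it has positive $\mathcal{H}^s$ measure with $s > 1$, and one is free to replace $E$ by any of its own weak tangents, since a weak tangent of a weak tangent of $F$ is again a weak tangent of $F$ and the transfer of the previous paragraph applies verbatim to it. I would use this freedom to pass to a microset $E'$ of $E$ that is Ahlfors--David regular of dimension at least $1$, and then invoke Orponen's theorem that an AD-regular planar set of dimension at least $1$ has distance set of full packing dimension; combined with $\dim_\mathrm{A} D(E') \ge \dim_\mathrm{P} D(E') = 1$ this would close the argument. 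It is precisely this extra structural freedom, and the positive measure furnished by Theorem \ref{goodweaktangent}, that are absent from the Hausdorff, box, and packing formulations of the distance-set problem for a fixed set, and that explain why the Assouad version is accessible while the others remain open.
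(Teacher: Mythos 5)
Your reduction and transfer steps are correct and coincide with the paper's strategy: your subsequential-limit argument showing $\dim_\mathrm{A} D(F) \ge \dim_\mathrm{A} D(E)$ for any weak tangent $E$ of $F$ is exactly the paper's key Lemma \ref{disttangent} (your variant, via continuity of $A \mapsto D(A)$ on $(\mathcal{K}(\mathbb{R}^2),d_\mathcal{H})$, is a perfectly valid way to prove it), and the reduction to showing $\dim_\mathrm{A} D(E) \ge 1$ for the tangent $E$ furnished by Theorem \ref{goodweaktangent} is also the paper's reduction. The problem is the final step, which you yourself flag as the main obstacle and then dispose of in one sentence: the claim that one may ``pass to a microset $E'$ of $E$ that is Ahlfors--David regular of dimension at least $1$.'' No justification is given, and this is not a known result; it is precisely the point where all the real work lies. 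Note first that positive $\mathcal{H}^s$ measure with $s>1$ does not even yield AD-regular \emph{subsets} of positive dimension: a Cantor-type set in the plane built by alternating long phases of full dyadic subdivision with long phases in which each square has a single child can be arranged to have $\mathcal{H}^s(E)>0$ for prescribed $s \in (1,2)$, yet at suitable scale pairs $r \ll R$ every ball $B(x,R)$, $x \in E$, meets only boundedly many $r$-balls of $E$, so no subset of $E$ is $t$-regular for any $t>0$. So the burden falls entirely on the passage to weak tangents, and there is no mechanism in the paper or in the literature producing AD-regular weak tangents of dimension at least $1$ from the hypothesis $\mathcal{H}^s(E)>0$; the only result of this flavour, \cite[Theorem 2.4]{patches}, applies in the extreme case of \emph{full} Assouad dimension (producing a tangent with interior) and says nothing for $\dim_\mathrm{A} F \in (1,2)$.

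The paper's actual proof exists exactly because this step cannot (as far as is known) be carried out. Instead of trying to feed Orponen's AD-regular theorem \cite{orponenAD}, it feeds a measure-theoretic substitute: starting from $\nu = \mathcal{H}^s|_{E'}$ (normalised, for a compact $E'\subseteq E$ of positive finite measure), it invokes Hochman--Shmerkin \cite[Theorems 7.7 and 7.10]{HochmanShmerkin} to produce a \emph{micromeasure} $\nu'$ of $\nu$ that generates an ergodic CP-chain and satisfies $\dim_\mathrm{H}\nu' \ge s > 1$, and then applies the Ferguson--Fraser--Sahlsten distance-set theorem (Theorem \ref{FFSthm}, from \cite{FFS}), which requires only that the generating measure be supported on a set of positive length --- a condition that follows from $\dim_\mathrm{H}\nu' > 1$ and is vastly weaker than AD-regularity. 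The geometric side of your argument (a second application of Lemma \ref{disttangent}, plus the paper's Lemma \ref{startang} relating supports of weak limits to Hausdorff limits) then transfers the conclusion back to $F$. So: your first two paragraphs reprove the paper's transfer lemma correctly, but the third paragraph replaces the theorem's essential content with an unsupported structural assertion, and as it stands the proof has a genuine gap there.
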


We will prove Theorem \ref{distanceset} in Section \ref{distancesetproof}.   Clearly, $\dim_\mathrm{A} F >1$ does not guarantee that $D(F)$ has non-empty interior, but our result combined with \cite[Theorem 2.4]{patches} shows that it \emph{does} guarantee the existence of a weak tangent to $D(F)$ with non-empty interior. Moreover, in the setting of Conjecture \ref{falconerconjecture} we obtain the following result on the level of weak tangents.

\begin{cor}
If $F \subseteq \mathbb{R}^2$ is any set with $\dim_\mathrm{H} F >1$, then  $D(F)$ has a weak tangent with non-empty interior.
\end{cor}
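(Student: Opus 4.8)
The plan is to deduce the corollary directly from Theorem~\ref{distanceset} together with the refinement \cite[Theorem 2.4]{patches}, which produces a weak tangent with non-empty interior for sets of \emph{full} Assouad dimension. The crucial observation is that $D(F)$ lives in $\mathbb{R}$, so that having Assouad dimension exactly $1$ is the same as having Assouad dimension equal to the ambient dimension, which is precisely the regime covered by \cite[Theorem 2.4]{patches}.

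First I would convert the Hausdorff hypothesis into an Assouad hypothesis. Since the Assouad dimension always dominates the Hausdorff dimension, the assumption $\dim_\mathrm{H} F > 1$ forces $\dim_\mathrm{A} F \geq \dim_\mathrm{H} F > 1$. Thus the hypotheses of Theorem~\ref{distanceset} are satisfied, and we conclude $\dim_\mathrm{A} D(F) = 1$.

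Next I would address the minor discrepancy that the weak tangent machinery of Definition~\ref{weaktangentdef} and of \cite{patches} is stated for closed sets, whereas $D(F)$ need not be closed. This is handled by passing to the closure: the Assouad dimension is invariant under taking closures, so $\dim_\mathrm{A} \overline{D(F)} = \dim_\mathrm{A} D(F) = 1$. Now $\overline{D(F)}$ is a closed subset of $\mathbb{R}$ whose Assouad dimension equals the ambient dimension $d = 1$, i.e.\ it has full Assouad dimension. Applying \cite[Theorem 2.4]{patches} with $d = 1$ then yields a weak tangent to $\overline{D(F)}$ with non-empty interior, which is the asserted conclusion.

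I do not expect any genuine obstacle here, since all the depth is already contained in Theorem~\ref{distanceset}; the remaining ingredients are the elementary monotonicity $\dim_\mathrm{H} \leq \dim_\mathrm{A}$, the closure-invariance of the Assouad dimension, and the identification of ``$\dim_\mathrm{A} = 1$ on the line'' with ``full Assouad dimension''. The only point deserving a word of care is the bookkeeping around closures, namely that a weak tangent extracted for $\overline{D(F)}$ is legitimately viewed as a weak tangent of $D(F)$; but this is immediate, since the Hausdorff metric $d_\mathcal{H}$, and hence the whole notion of weak tangent, does not distinguish a set from its closure.
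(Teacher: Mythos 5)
Your proposal is correct and takes essentially the same route as the paper, whose entire justification is the remark immediately preceding the corollary: combine $\dim_\mathrm{A} F \geq \dim_\mathrm{H} F > 1$ with Theorem \ref{distanceset} to get $\dim_\mathrm{A} D(F) = 1$, and then apply \cite[Theorem 2.4]{patches} to the set $D(F) \subseteq \mathbb{R}$, which has full Assouad dimension in its ambient space. Your closure bookkeeping is extra care that the paper omits entirely, and it is harmless here (one only needs to note that weak tangents can be extracted using points lying in the interior of the reference set, since closure and intersection with the reference set need not commute on its boundary), so there is nothing to fix.
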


In the higher dimensional case we obtain partial results.  In particular, we show that the Falconer-Erdo\u{g}an-Wolff bounds for Hausdorff dimension also hold for Assouad dimension (without any measureability assumptions), see \cite{MattilaFourier} for a recent survey of the state of the art.  It appears that the appropriate analogues of these bounds are \emph{not} known to hold for other notions of dimension, such as box-counting or packing dimension.

\begin{thm} \label{distanceset2}
Let $d \geq 2$ be an integer and $F \subseteq \mathbb{R}^d$ be any non-empty set.  Then
\[
\dim_\text{\emph{A}} D( F)  \geq  \inf_{\substack{E  \,  \in  \, \mathcal{K}(  \mathbb{R}^d ) : \\
\dim_{\mathrm{H}}E \, = \,  \dim_{\mathrm{A}} F}} \dim_\text{\emph{A}} D( E).
\]
In particular, if $\dim_\mathrm{A} F < d/2+1/3$, then
\[
\dim_\text{\emph{A}} D( F)  \geq \max  \left\{ \frac{6 \dim_\mathrm{A} F+2-3d}{4}, \, \dim_\mathrm{A} F-\frac{d-1}{2} \right\}
\]
and, if $\dim_\text{\emph{A}} F  \geq d/2+1/3$, then $\dim_\text{\emph{A}} D( F)  = 1.$
\end{thm}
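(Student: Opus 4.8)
The plan is to pass to a weak tangent and import the classical (Hausdorff dimension) distance set machinery there. After replacing $F$ by its closure (which affects neither $\dim_{\mathrm{A}} F$ nor $\dim_{\mathrm{A}} D(F)$, since $\dim_{\mathrm{A}}$ is closure-invariant and $\overline{D(F)}=D(\overline F)$ for bounded $F$) we may assume $F$ is compact; we may also assume $s := \dim_{\mathrm{A}} F > 0$, as otherwise the asserted bounds are trivial. By Theorem \ref{goodweaktangent} there is a compact weak tangent $E$ to $F$ with $\dim_{\mathrm{H}} E = s$. The engine of the proof is a transference lemma: the distance set of a weak tangent to $F$ is contained in a weak tangent to $D(F)$. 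Granting this, monotonicity of the Assouad dimension together with Proposition \ref{weaktangent} will give $\dim_{\mathrm{A}} D(F) \geq \dim_{\mathrm{H}} D(E)$, reducing everything to lower bounding $\dim_{\mathrm{H}} D(E)$ from $\dim_{\mathrm{H}} E = s$.

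To set up the lemma, fix similarities $T_k(x) = c_k O_k(x) + t_k$ with $d_{\mathcal{H}}(E, T_k(F) \cap X) \to 0$, and write $A_k = T_k(F) \cap X = T_k(F_k)$ where $F_k = F \cap T_k^{-1}(X)$. Since $(x,y)\mapsto |x-y|$ is uniformly continuous on the compact set $X \times X$, the distance-set map is continuous on $\mathcal{K}(X)$, so $D(A_k) \to D(E)$ in $\mathcal{K}([0,\diam X])$. As $O_k$ is orthogonal, $T_k$ scales all distances by exactly $c_k$, giving $D(A_k) = c_k D(F_k) \subseteq c_k D(F)$; since also $D(A_k) \subseteq [0,\diam X] =: X'$, we obtain $D(A_k) \subseteq S_k(D(F)) \cap X'$ where $S_k \colon t \mapsto c_k t$ is a similarity of $\mathbb{R}$. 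The compact sets $S_k(D(F)) \cap X' \subseteq X'$ admit a subsequential Hausdorff limit $G$, which is by definition a weak tangent to $\overline{D(F)}$, and passing to the limit in the inclusion $D(A_k) \subseteq S_k(D(F)) \cap X'$ yields $D(E) \subseteq G$. Hence, using Proposition \ref{weaktangent} first, monotonicity of $\dim_{\mathrm{A}}$ second, and $\dim_{\mathrm{A}} \geq \dim_{\mathrm{H}}$ third,
\[
\dim_{\mathrm{A}} D(F) = \dim_{\mathrm{A}} \overline{D(F)} \geq \dim_{\mathrm{A}} G \geq \dim_{\mathrm{A}} D(E) \geq \dim_{\mathrm{H}} D(E).
\]

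It then remains to apply the known Hausdorff dimension distance set estimates to the Borel set $E$ with $\dim_{\mathrm{H}} E = s$. Falconer's energy bound gives $\dim_{\mathrm{H}} D(E) \geq s - (d-1)/2$, while the Erdo\u{g}an--Wolff estimate gives $\dim_{\mathrm{H}} D(E) \geq (6s+2-3d)/4$ in the range $s \leq d/2 + 1/3$; taking the larger of the two yields the claimed maximum. For the moreover statement, when $s \geq d/2 + 1/3$ the Erdo\u{g}an--Wolff theorem (Wolff for $d=2$, Erdo\u{g}an for general $d$) gives $\mathcal{L}^1(D(E)) > 0$, so $\dim_{\mathrm{H}} D(E) = 1$ and thus $\dim_{\mathrm{A}} D(F) \geq 1$; since $D(F) \subseteq \mathbb{R}$ forces $\dim_{\mathrm{A}} D(F) \leq 1$, equality follows.

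The main obstacle is the transference lemma, and specifically the fact that a Hausdorff-metric limit of $S_k(D(F)) \cap X'$ need not equal $D(E)$: a distance $c_k|a-b| \leq \diam X$ may arise from a pair $a,b \in F$ that is \emph{not} carried into $X$ by $T_k$, so the reverse inclusion can genuinely fail. The device that circumvents this is to demand only the one-sided inclusion $D(E) \subseteq G$ and then invoke monotonicity of the Assouad dimension, rather than insisting that $D(E)$ itself be a weak tangent to $D(F)$. A secondary point to check carefully is the interchange of closures with distance sets in the initial reduction, namely that weak tangents to $D(F)$ and to $\overline{D(F)}$ coincide and that $\dim_{\mathrm{A}} \overline{D(F)} = \dim_{\mathrm{A}} D(F)$.
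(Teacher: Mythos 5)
Your proposal is correct in substance and follows the paper's proof essentially verbatim: pass to a compact weak tangent $E$ with $\dim_{\mathrm{H}} E = \dim_{\mathrm{A}} F$ via Theorem \ref{goodweaktangent}; transfer distance sets to weak tangents (your ``transference lemma'' is exactly the paper's Lemma \ref{disttangent}, proved by the same device of taking a subsequential Hausdorff limit of the rescaled distance sets and establishing only the one-sided inclusion $D(E) \subseteq G$, then invoking monotonicity of Assouad dimension); and finally quote the Falconer and Erdo\u{g}an--Wolff bounds for $\dim_{\mathrm{H}} D(E)$. You also correctly identified why the one-sided inclusion suffices, which is the crux of the lemma.

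One slip should be repaired. The theorem is for \emph{arbitrary} $F \subseteq \mathbb{R}^d$, and your opening reduction ``we may assume $F$ is compact'' is unavailable when $F$ is unbounded: the closure of an unbounded set is closed but not compact, and, unlike Hausdorff dimension, Assouad dimension cannot be localised to bounded pieces (for instance $\dim_{\mathrm{A}} \mathbb{Z} = 1$ while every bounded subset of $\mathbb{Z}$ has Assouad dimension $0$), so the unbounded case cannot be discarded. All you can assume --- and all you need --- is that $F$ is closed. The only place compactness of $F$ actually enters your argument is the assertion that the sets $S_k(D(F)) \cap X'$ are compact; for a closed unbounded $F$ the set $D(F)$ need not be closed, so this can fail. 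The repair is cosmetic: take subsequential Hausdorff limits of $S_k(\overline{D(F)}) \cap X'$ instead, note that $D(A_k) \subseteq S_k(D(F)) \cap X' \subseteq S_k(\overline{D(F)}) \cap X'$, and run the same chain $\dim_{\mathrm{A}} D(F) = \dim_{\mathrm{A}} \overline{D(F)} \geq \dim_{\mathrm{A}} G \geq \dim_{\mathrm{A}} D(E) \geq \dim_{\mathrm{H}} D(E)$. This is exactly how the paper proceeds: its Lemma \ref{disttangent} is stated for closed (not compact) sets, and the unbounded/non-closed case of the theorem is handled through the inclusion $\overline{D(F)} \supseteq D(\overline{F})$ rather than through any equality $\overline{D(F)} = D(\overline{F})$, which is the part of your reduction that needed boundedness.
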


We will prove Theorem \ref{distanceset2} in Section \ref{distanceset2proof}. The  non-quantitative part of Theorem  \ref{distanceset2} provides a recipe for directly transferring \emph{any} bounds concerning the Hausdorff (or box) dimension version of the distance set problem into analogous bounds concerning the  Assouad dimension version.  In particular, this shows that the Hausdorff dimension version of  Conjecture \ref{distanceconj} implies the Assouad dimension version.

Using an elementary `product and project' argument, it is easily seen (and well-known) that $\overline{\dim}_\text{B} D( F) \leq 2 \overline{\dim}_\text{B} F$ for any bounded $F \subseteq \mathbb{R}^d$.   The same bound holds for packing dimension, $\dim_\mathrm{P}$.   However, there exists a compact set  in $\mathbb{R}^d$ (for any $d$) with Hausdorff dimension 0, but whose distance set contains an interval, see \cite{squares, davies}.  In particular, \cite[Proposition 3.1]{squares}  provides a compact set $A$ with Hausdorff dimension $0$ but for which $A \cap (A+t) \neq \emptyset $ for all $t \in [0,1]^d$, for example.  This proves that $D(A)$ contains the interval $[0, \sqrt{d}]$.   Despite these examples, there is still some control on the Hausdorff dimension of the distance sets of small sets in that one always has $\dim_\mathrm{H} D( F) \leq \dim_\mathrm{H} F + \dim_\mathrm{P} F$.  The situation for Assouad dimension is rather different in that arbitrarily small sets from a dimension point of view can have distance sets with full Assouad dimension.

\begin{example} \label{distancesetexamples2}
There exists a non-empty compact set $F \subseteq [0,1]$ with $\dim_\mathrm{A} F  =0$ and   $\dim_\text{\emph{A}} D( F)  = 1.$ 
\end{example}

We will provide the details of this example in Section \ref{distancesetexamples2proof}.  If one allows $F$ to have positive dimension, then the examples can be very simple and are also quite prevalent.   In particular, for every $s \in (0,1)$, there exists an AD-regular set with dimension $s$  but  $\dim_\mathrm{A} D( F)  = 1.$   The following theorem readily yields such examples since self-similar sets satisfying the open set condition are AD-regular. Recall that a set is \emph{self-similar} if it is the unique non-empty compact set $F$ satisfying
\[
F = \bigcup_{i} S_i(F)
\]
for a finite collection of similarity maps $S_i$ whose similarity ratios are all strictly less than 1.  Such a collection of maps is called an iterated function system (IFS), and we refer the reader to \cite[Chapter 9]{falconer} for more details, including the definition of the open set condition.  Roughly speaking, the open set condition is satisfied if the `pieces' $S_i(F)$ do not overlap too much.

\begin{thm} \label{selfsimthm}
Let $F \subseteq \mathbb{R}^d$ be a self-similar set which is not a single point and suppose that two of the defining similarity ratios are given by $a, b \in (0,1)$ satisfying $\log a / \log b \notin \mathbb{Q}$.  Then $\dim_\text{\emph{A}} D( F)  = 1.$ 
\end{thm}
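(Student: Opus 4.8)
The plan is to exhibit inside $D(F)$ a scaled copy of the multiplicative set $G=\{a^jb^k : j,k\in\mathbb{Z}_{\ge 0}\}$ and then to show $\dim_\mathrm{A} G = 1$ by producing a weak tangent to $D(F)$ equal to a genuine interval, at which point Proposition \ref{weaktangent} finishes the argument.

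First I would produce the required distances. Since $F$ is not a single point it is a compact set with at least two elements; fix $p,q\in F$ with $\delta:=|p-q|>0$. Let $S_a,S_b$ be the two maps in the defining IFS with ratios $a,b$. Because $F=\bigcup_i S_i(F)$ we have $S_a(F)\subseteq F$ and $S_b(F)\subseteq F$, so for all $j,k\ge 0$ the composition $S_a^{\,j}\circ S_b^{\,k}$ is a similarity of ratio $a^jb^k$ mapping $F$ into itself. Applying it to $p$ and $q$ yields two points of $F$ at distance $a^jb^k\delta$, whence $a^jb^k\delta\in D(F)$. Thus $\delta\,G\subseteq D(F)$.

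Next I would analyse $G$ near $0$ using the weak tangent machinery. Choose integers $j_k,k_k\to\infty$ and set $\rho_k=a^{j_k}b^{k_k}\to 0$, and let $\tilde T_k(x)=x/(\delta\rho_k)$. Then $\tilde T_k(\delta G)=\{a^mb^n : m\ge -j_k,\ n\ge -k_k\}$, which for large $k$ contains $\{a^mb^n : |m|,|n|\le N\}$ for every fixed $N$. The crucial point is that $\log a/\log b\notin\mathbb{Q}$ forces the additive group $\mathbb{Z}\log a+\mathbb{Z}\log b$ to be dense in $\mathbb{R}$ (a subgroup of $\mathbb{R}$ is cyclic or dense, and it is cyclic precisely when the ratio of generators is rational), so $\{a^mb^n : m,n\in\mathbb{Z}\}=\exp(\mathbb{Z}\log a+\mathbb{Z}\log b)$ is dense in $(0,\infty)$. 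Fixing the reference set $X=[1/2,2]$, this density together with $\delta G\subseteq D(F)$ gives $d_\mathcal{H}\big(X,\ \tilde T_k(D(F))\cap X\big)\to 0$, so the interval $E=X$ is a weak tangent to the compact set $D(F)$. Since $\dim_\mathrm{A} E=1$, Proposition \ref{weaktangent} yields $\dim_\mathrm{A} D(F)\ge 1$; the reverse inequality is immediate because $D(F)$ is a bounded subset of $\mathbb{R}$. Hence $\dim_\mathrm{A} D(F)=1$.

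The only delicate step is the density claim, where the irrationality hypothesis enters in an essential way: without it $\mathbb{Z}\log a+\mathbb{Z}\log b$ is discrete and the weak tangent would be a self-similar Cantor-type set of dimension strictly below $1$ rather than an interval. Care is also needed to observe that although $G$ uses only non-negative exponents, zooming in deep inside $G$ (taking $j_k,k_k$ large) is exactly what liberates the negative exponents and produces the two-sided net needed for density; this is precisely why a tangent at a single fixed point would not suffice, in the spirit of Example \ref{notangentexample}.
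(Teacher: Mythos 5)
Your proof is correct, and its skeleton coincides with the paper's: plant the multiplicative set $\{a^m b^n \delta : m,n \geq 0\}$ inside $D(F)$ by applying $S_a^m \circ S_b^n$ to a fixed pair of distinct points, rescale so that an interval arises as a weak tangent to the compact set $D(F)$, and finish with Proposition \ref{weaktangent} together with the trivial upper bound. Where you genuinely diverge is the density step, and your route is the more elementary one. The paper rescales only by powers of $b$, taking $T_k(x) = \Delta^{-1}b^{-k}x$, so in its limit set the exponent of $a$ stays non-negative, and what is required is density of $\{m\log a + n\log b : m \in \mathbb{N},\, n \in \mathbb{Z}\}$ in $(-\infty,0)$ --- a one-sided, semigroup-flavoured statement that the paper proves via Dirichlet's theorem on Diophantine approximation, extracting arithmetic progressions with arbitrarily small gaps. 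Your two-parameter zoom by $a^{j_k}b^{k_k}$ with $j_k,k_k \to \infty$ frees both exponents, so you only need density of the full group $\mathbb{Z}\log a + \mathbb{Z}\log b$ in $\mathbb{R}$, which follows from the soft dichotomy that an additive subgroup of $\mathbb{R}$ is either cyclic or dense. Trading a slightly fancier family of rescaling maps for the elimination of the Diophantine input is a real simplification, and the rest of your argument (well-definedness of the distances, compactness of $D(F)$, the upper bound) is sound. Two minor remarks. First, the assertion $d_\mathcal{H}\big(X, \tilde{T}_k(D(F)) \cap X\big) \to 0$ deserves one explicit line: since $\tilde{T}_k(D(F)) \cap X \subseteq X$, only one inclusion is at issue, and any finite $\varepsilon$-net of $X$ drawn from the dense group has bounded exponents, hence is contained in $\tilde{T}_k(\delta G) \cap X$ once $j_k,k_k$ exceed that bound. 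Second, your closing claim that ``a tangent at a single fixed point would not suffice'' is not accurate for this problem: your own maps $\tilde{T}_k$ fix the point $0 \in D(F)$ and have similarity ratios tending to infinity, so running the same argument with reference set $B(0,1)$ produces a genuine tangent at $0$ whose subsequential limits contain $[0,1]$. The set $D(F)$ accumulates multiplicatively at $0$, which is exactly a situation where tangents do capture the Assouad dimension; Example \ref{notangentexample} shows that tangents fail in general, not that they fail here.
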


We will prove Theorem \ref{selfsimthm} in Section \ref{distancesetexamples2proof}.  The distance set problem for self-similar sets was considered by Orponen \cite{orpselfsim}.  He proved that if a self-similar set in the plane has positive $\mathcal{H}^1$ measure, then the corresponding  distance set has Hausdorff dimension one. Despite the previous examples, Theorem \ref{distanceset} is still sharp  in the sense that one cannot guarantee that the distance set has full Assouad dimension for sets $F$ with $\dim_\mathrm{A}F>s$ for any $s<1$.  

\begin{example} \label{distancesetexamples1}
For every $s \in [0,1)$, there exists a compact set $F \subseteq [0,1]$ with $\dim_\mathrm{A} F  \geq s$  but   $\dim_\text{\emph{A}} D( F)  < 1.$
\end{example}

We will provide the details of this example in Section \ref{distancesetexamples1proof}.

\subsection{Projections}

How dimension behaves under orthogonal projection is a classical problem in geometric measure theory.  It was first considered by Besicovitch and later by Marstrand; see Marstrand's seminal 1954 paper \cite{Marstrand}.  For integers $k,d$ with $1 \leq k <d$, one considers projections of $\mathbb{R}^d$ onto $k$-dimensional subspaces.  The $k$-dimensional subspaces of $\mathbb{R}^d$ come with a natural $k(d-k)$ dimensional manifold structure, and so come equipped with a $k(d-k)$ dimensional analogue of Lebesgue measure.  We identify each $k$-dimensional subspace $V$ with the orthogonal projection $\pi: \mathbb{R}^d \to V$ and denote the set of all such projections as $G_{d,k}$.  We can thus make statements about almost all orthogonal projections  $\pi \in G_{d,k}$.  The manifold $G_{d,k}$ is usually called the Grassmannian manifold and we refer the reader to \cite[Chapter 3]{mattila} for more information on this manifold and its natural measure. The classical result for Hausdorff dimension, often referred to as Marstrand's projection theorem, is that for a fixed  Borel set $F \subseteq \mathbb{R}^d$,  almost all $\pi \in G_{d,k}$ satisfy
\[
\dim_\text{H}  \pi F \ =  \ \min\{k,\dim_\text{H} F\}.
\]
This was first proved by Marstrand in the case $d=2$ \cite{Marstrand} and later by Mattila \cite{MattilaProj} in the general case.  Similar results exist for upper and lower box dimension and packing dimension in the sense that for almost all $\pi \in G_{d,k}$ the dimension of $\pi F$ is equal to a constant. We refer the reader to the recent survey articles \cite{FalconerFraserJin, MattilaSurvey} for an overview of the rich and interesting theory of dimensions of projections.   A striking difference in the case of Assouad dimension, is that the dimension of $\pi F$ need not be almost surely constant.  In particular, \cite[Theorem 2.5]{FraserOrponen} provided an example of  a set in the plane which projects to sets with two different Assouad dimensions in positively many directions $\pi \in G_{2,1}$.  Our main result on projections shows that one can at least give an almost sure lower bound on the Assouad dimension of $\pi F$.

\begin{thm} \label{orthoproj}
Let $F \subseteq \mathbb{R}^d$ be any non-empty set and $1 \leq k <d$ be an integer.  Then 
\[
\dim_\text{\emph{A}}  \pi F \ \geq  \ \min\{k,\dim_\text{\emph{A}} F\}
\]
for almost all $\pi \in G_{d,k}$.
\end{thm}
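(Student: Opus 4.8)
The plan is to transfer the almost-sure Hausdorff-dimension projection theorem of Marstrand and Mattila to the Assouad dimension by passing to a weak tangent. I would first record the harmless reductions: we may assume $s := \dim_{\mathrm A} F \in (0,d]$, since $s=0$ makes the claim trivial; and since the Assouad dimension is closure-stable we have $\dim_{\mathrm A}\overline F = s$ and $\dim_{\mathrm A}\pi F = \dim_{\mathrm A}\overline{\pi F}$, so it suffices to bound $\dim_{\mathrm A}\overline{\pi F}$ from below, keeping in mind the elementary inclusion $\pi\overline F \subseteq \overline{\pi F}$. Applying Theorem \ref{goodweaktangent} to the closed set $\overline F$ then furnishes a compact weak tangent $E \subseteq X$ with $\dim_{\mathrm H} E = \dim_{\mathrm A} E = s$, realised through similarity maps $T_k$ satisfying $d_{\mathcal H}(E, T_k(\overline F)\cap X) \to 0$. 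Crucially, inspecting the construction behind Theorem \ref{goodweaktangent} (see \cite{kaenmakiassouad2}) lets us take each $T_k$ to be a homothety, $T_k(x) = c_k x + t_k$ (the orthogonal part is the identity), exactly the situation flagged in the remark after the definition of a similarity map.

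Next I would apply Marstrand's theorem, in the general form due to Mattila \cite{MattilaProj}, to the compact set $E$: for almost every $\pi \in G_{d,k}$ one has $\dim_{\mathrm H}\pi E = \min\{k, \dim_{\mathrm H} E\} = \min\{k,s\}$. The core of the argument is then to show that, for each such $\pi$ (with subspace $V$), the projected set $\pi E$ sits inside a genuine weak tangent of $\overline{\pi F}$. Here I would exploit that projections commute with homotheties: setting $S_k(y) = c_k y + \pi(t_k)$ on $V$ gives $\pi\circ T_k = S_k\circ\pi$, whence $\pi(T_k(\overline F)) = S_k(\pi\overline F) \subseteq S_k(\overline{\pi F})$. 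Taking $X' = \pi X \subseteq V$ as reference set, the $1$-Lipschitz property of $\pi$ gives $d_{\mathcal H}(\pi E,\, \pi(T_k(\overline F)\cap X)) \to 0$, while
\[
\pi\big(T_k(\overline F)\cap X\big) \ \subseteq\ \pi(T_k(\overline F)) \cap \pi X \ =\ S_k(\pi\overline F)\cap X' \ \subseteq\ S_k(\overline{\pi F})\cap X'.
\]
Using compactness of $\mathcal K(X')$, I would pass to a subsequence along which $S_k(\overline{\pi F})\cap X'$ converges to some compact $E'' \subseteq X'$, which is by definition a weak tangent to the closed set $\overline{\pi F}$; taking limits in the containment above forces $\pi E \subseteq E''$. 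Chaining Proposition \ref{weaktangent} with monotonicity of Hausdorff dimension then closes the argument:
\[
\dim_{\mathrm A}\pi F \ =\ \dim_{\mathrm A}\overline{\pi F} \ \geq\ \dim_{\mathrm A} E'' \ \geq\ \dim_{\mathrm H} E'' \ \geq\ \dim_{\mathrm H}\pi E \ =\ \min\{k,s\},
\]
valid for almost every $\pi$.

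The step I expect to be the main obstacle is the interaction between the projection and the ``intersect with the reference set'' operation in the definition of a weak tangent. It is in general \emph{false} that $\pi E$ is itself a weak tangent to $\pi F$, because points of $T_k(\overline F)$ lying outside $X$ but projecting into $X'$ swell $S_k(\overline{\pi F})\cap X'$ beyond $\pi(T_k(\overline F)\cap X)$. My way around this is to ask only for the one-sided conclusion $\pi E \subseteq E''$: these spurious points can only enlarge the limit $E''$, which is harmless when chasing a lower bound. A secondary point demanding care is the reduction to homothety tangents, without which the identity $\pi\circ T_k = S_k\circ\pi$ breaks down; a nontrivial rotation $O_k$ would turn $\pi\circ T_k$ into projection onto the $k$-varying subspace $O_k^{-1}V$, which cannot be fed into a single application of Marstrand's theorem.
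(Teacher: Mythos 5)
Your proposal is correct and is essentially the same argument as the paper's: extract a full-dimensional compact weak tangent $E$ via Theorem \ref{goodweaktangent}, reduce to homothetic maps $T_k$ so that projection commutes with them, show $\pi E$ is contained in a (subsequential) weak tangent of the projected set, and apply the Marstrand--Mattila theorem to $E$ before chaining Proposition \ref{weaktangent} with monotonicity. The only cosmetic differences are that you justify the homothety reduction by citing the construction in \cite{kaenmakiassouad2} (the paper offers this too, but also gives a self-contained argument via compactness of the orthogonal group, replacing $E$ by $O^{-1}(E)$ along a subsequence $O_k \to O$), and that you work with $\overline{\pi F}$ throughout, which if anything treats the closedness hypothesis in Proposition \ref{weaktangent} more carefully than the paper does.
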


We will prove this theorem in Section \ref{orthoprojproof}.  The case when $k=1$ and $d=2$ was proved by Fraser and Orponen, see \cite[Theorem 2.1]{FraserOrponen}, but our proof is completely different.  Recall that Fraser and Orponen proved that one cannot replace the almost sure lower bound with an almost sure equality.  Our result is also sharp: for example, if $F \subseteq \mathbb{R}^d$ is contained in a $k$-dimensional subspace, then it is easy to see that for almost all $\pi \in G_{d,k}$ we have $\dim_\text{A}  \pi F  =  \dim_\text{A} F$.

Interestingly, Theorem \ref{orthoproj} is false if one replaces Assouad dimension by packing or  upper box dimension.  The packing/upper box dimension of $\pi F$ is almost surely constant, but this constant can be strictly smaller than the minimum of $k$ and the packing/upper box dimension of $F$, see \cite{FalconerFraserJin, MattilaSurvey}.

We are also able to estimate the size of the exceptional set in the above theorem, i.e. the zero measure subset of $G_{d,k}$ where the lower bound from the theorem fails.  In general, the exceptional set can be somewhere dense in $G_{d,k}$ and so considering the Assouad dimension of this set is the wrong approach (since the Assouad dimension of any somewhere dense set is equal to the dimension of the ambient space).  We therefore compute the \emph{Hausdorff} dimension of the exceptional set.  Note that $G_{d,k}$ is a smooth manifold of dimension $k(d-k)$ and so our results are formulated to allow comparison with the dimension of the ambient space. 

\begin{thm} \label{orthoprojexceptions}
Let $F \subseteq \mathbb{R}^d$ be any non-empty set and $1 \leq k <d$ be an integer.  For any $s \geq 0$ we have 
\[
\dim_{\mathrm{H}} \left\{ \pi \in G_{d,k} \, : \,  \dim_\text{\emph{A}}  \pi F < s \right\}  \  \leq  \ \sup_{\substack{E  \,  \in  \, \mathcal{K}(  \mathbb{R}^d ) : \\
\dim_{\mathrm{H}}E \, = \,  \dim_{\mathrm{A}} F}} \dim_{\mathrm{H}}  \left\{ \pi \in G_{d,k} \, : \,  \dim_\mathrm{A}  \pi E < s \right\}.
\]
In particular, if  $0 < s \leq   \min\{k,\dim_\text{\emph{A}} F\}$, then 
\[
\dim_{\mathrm{H}} \left\{ \pi \in G_{d,k} \, : \,  \dim_\text{\emph{A}}  \pi F < s \right\}  \  \leq  \  k(d-k) +s-  \max\{k,\dim_\text{\emph{A}} F\}.
\]
\end{thm}

We will prove this theorem in Section \ref{orthoprojexceptionsproof}.  Again, there does not appear to be a direct analogue of this exceptional set result for packing dimension or box dimension.  See \cite{orppacking} for some results in this direction and for an indication of why such analogues are difficult (or even impossible) to obtain.  Similar to Theorem  \ref{distanceset2}, the  non-quantitative part of Theorem  \ref{orthoprojexceptions} provides a recipe for directly transferring \emph{any} bounds concerning the Hausdorff dimension of the exceptional set in the projection theorem for Hausdorff dimension into analogous bounds concerning the projection theorem for Assouad dimension.  For example, the quantitative bounds in Theorem  \ref{orthoprojexceptions} can be improved in the plane by using the non-quantitative part together with \cite[Theorem 1.13]{orpfurst} or \cite[Theorem 4]{bourgainsumprod}. We leave precise formulations to the reader.

Finally, we present some general results concerning restricted families of projections.  Let $P \subseteq \mathbb{R}^n$ be a Borel set with positive $n$-dimensional Lebesgue measure which parameterises a family of projections $\{\pi_t \in G_{d,k} : t \in P\}$ (we only assume that the map $t \mapsto \pi_t$ is a bijection, but in practise it will usually have strong additional regularity properties). One now wants to make statements about the dimensions of $\pi_t F$ for almost all $t \in P$ in situations where $\{ \pi_t : t \in P\}$ is a null set in $G_{d,k}$, i.e. a genuinely \emph{restricted} family of projections where Marstrand's theorem yields no information directly.  There are numerous results along these lines, most focusing on Hausdorff dimension, and we refer the reader to \cite{FalconerFraserJin, FassOrp} for a survey of recent results.  Rather than present several different Assouad dimension analogues, we give one `meta theorem', which can apply in many cases.

\begin{thm} \label{restricted}
Let $F \subseteq \mathbb{R}^d$ be any  non-empty set and $1 \leq k <d$ be an integer. Let $P \subseteq \mathbb{R}^n$ be a positive measure set which parameterises a family of projections $\{\pi_t \in G_{d,k} : t \in P\}$ as above.  Then for almost all $t \in P$ we have
\[
\dim_\text{\emph{A}}  \pi_t  F \ \geq  \ \inf_{\substack{E  \,  \in  \, \mathcal{K}(  \mathbb{R}^d ) : \\
\dim_{\mathrm{H}}E \, = \,  \dim_{\mathrm{A}} F}} \ \underset{s \in P}{\mathrm{essinf}}  \,  \dim_\text{\emph{A}} \pi_s E.
\]
\end{thm}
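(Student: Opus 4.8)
The plan is to exploit Theorem \ref{goodweaktangent} together with the fact that orthogonal projections conjugate homotheties to homotheties, so that a good weak tangent to $F$ projects into a weak tangent of $\pi_t F$. We may assume $F$ is closed, since $\dim_{\mathrm{A}} F = \dim_{\mathrm{A}} \overline{F}$; moreover $\pi_t F \subseteq \pi_t \overline{F} \subseteq \overline{\pi_t F}$ gives $\dim_{\mathrm{A}} \pi_t F = \dim_{\mathrm{A}} \overline{\pi_t F}$ for every $t$, so we are free to take closures of projected sets when invoking Proposition \ref{weaktangent}. Write $\alpha = \dim_{\mathrm{A}} F$ and let $\beta$ denote the right-hand side of the asserted inequality. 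Applying Theorem \ref{goodweaktangent} to $F$ produces a compact set $E_0$ which is a weak tangent to $F$, realised with respect to a reference set $X \in \mathcal{K}(\mathbb{R}^d)$ (a closed ball, say) by homotheties $T_k(x) = c_k x + v_k$ with $c_k > 0$ and $d_\mathcal{H}(E_0, T_k(F) \cap X) \to 0$, and which satisfies the crucial property $\dim_{\mathrm{H}} E_0 = \alpha$.

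The core of the proof is the claim $(\ast)$ that $\dim_{\mathrm{A}} \pi_t F \geq \dim_{\mathrm{A}} \pi_t E_0$ for every $t \in P$. Fix $t$ and put $X' = \pi_t(X) \in \mathcal{K}(V_t)$. Linearity of $\pi_t$ gives $\pi_t \circ T_k = \sigma_k \circ \pi_t$, where $\sigma_k(y) = c_k y + \pi_t(v_k)$ is again a homothety, now of $V_t$; in particular $\sigma_k(\pi_t F) = \pi_t(T_k F)$. Consider $G_k := \sigma_k(\pi_t F) \cap X' \in \mathcal{K}(X')$. Since $\mathcal{K}(X')$ is compact, we may pass to a subsequence along which $G_k \to G$ for some $G \in \mathcal{K}(X')$. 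By Definition \ref{weaktangentdef}, $G$ is then a weak tangent to $\pi_t F$ (reference set $X'$, similarities $\sigma_k$), so Proposition \ref{weaktangent} yields $\dim_{\mathrm{A}} \pi_t F \geq \dim_{\mathrm{A}} G$.

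To finish $(\ast)$ it suffices to show $\pi_t E_0 \subseteq G$, whence $\dim_{\mathrm{A}} \pi_t E_0 \leq \dim_{\mathrm{A}} G$ by monotonicity. The subtle point is that $\pi_t(A \cap X)$ is in general strictly smaller than $\pi_t(A) \cap X'$, so $\pi_t E_0$ itself need not be a weak tangent of $\pi_t F$; this is exactly why we passed to the limit set $G$. Now, since $d_\mathcal{H}(E_0, T_k(F) \cap X) \to 0$, each point of $E_0$ lies within some $\delta_k \to 0$ of $T_k(F) \cap X$, and applying the $1$-Lipschitz map $\pi_t$ shows each point of $\pi_t E_0$ lies within $\delta_k$ of $\pi_t\big(T_k(F) \cap X\big) \subseteq \pi_t(T_k F) \cap X' = G_k$. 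Letting $k \to \infty$ and using $G_k \to G$ forces $\pi_t E_0 \subseteq G$, establishing $(\ast)$.

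It remains to combine $(\ast)$ with the definition of $\beta$. As $E_0$ is compact with $\dim_{\mathrm{H}} E_0 = \alpha = \dim_{\mathrm{A}} F$, it is an admissible competitor in the infimum defining $\beta$, so $\mathrm{essinf}_{s \in P}\, \dim_{\mathrm{A}} \pi_s E_0 \geq \beta$; equivalently $\dim_{\mathrm{A}} \pi_s E_0 \geq \beta$ for almost every $s \in P$. For such $s = t$, the claim $(\ast)$ gives $\dim_{\mathrm{A}} \pi_t F \geq \dim_{\mathrm{A}} \pi_t E_0 \geq \beta$, which is the assertion for almost every $t \in P$. I expect the main difficulty to be precisely the non-commutation of $\pi_t$ with intersections in the proof of $(\ast)$: one cannot take $\pi_t E_0$ as the weak tangent and must instead extract the genuine Hausdorff limit $G \supseteq \pi_t E_0$. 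A secondary point, guaranteed by the microset construction behind Theorem \ref{goodweaktangent}, is that the $T_k$ may be chosen as pure homotheties, so that the $\sigma_k$ remain similarities of $V_t$ for every projection $\pi_t$.
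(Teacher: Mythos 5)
Your proposal is correct and takes essentially the same approach as the paper: the paper proves Theorem \ref{restricted} by reusing the argument from its proof of Theorem \ref{orthoproj}, which is precisely your claim $(\ast)$ --- a homothetic weak tangent $E$ with $\dim_{\mathrm{H}} E = \dim_{\mathrm{A}} F$ (from Theorem \ref{goodweaktangent}) projects, for \emph{every} $t$, into a subsequential Hausdorff-limit weak tangent of $\pi_t F$, giving $\dim_{\mathrm{A}} \pi_t F \geq \dim_{\mathrm{A}} \pi_t E$ --- followed by the same essential-infimum/infimum chain using $E$ as a competitor. The only cosmetic differences are that the paper writes the conjugated homothety as $\pi \circ T_k \circ \pi^{-1}$ and justifies the reduction to homothetic $T_k$ by a compactness argument on the orthogonal group, whereas you invoke that reduction directly from the microset construction (a justification the paper also endorses).
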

We will prove this theorem in Section \ref{restrictedproof}.  This result gives a recipe for transforming results on the Hausdorff dimension into results on the Assouad dimension.  To motivate this approach, we give one such example, which follows from a result of F\"assler and Orponen \cite{FassOrp} concerning projections of $\mathbb{R}^3$ onto lines  in a `non-degenerate' family of directions.

\begin{cor} \label{restricted2}
Let $F \subseteq \mathbb{R}^3$ be any  set and $\phi:(0,1) \to S^2$ be a $C^3$ bijection such that for all $t \in (0,1) $  the vectors $\{\phi(t), \phi'(t), \phi''(t)\}$ span $\mathbb{R}^3$ and let $\pi_t$ denote projection onto the line in direction $\phi(t)$. 
\begin{enumerate}
\item If  $\dim_\text{\emph{A}}   F = s >1/2$, then there exists a constant $\sigma(s)>1/2$ such that  for almost all $t \in (0,1)$
\[
\dim_\text{\emph{A}}  \pi_t  F \ \geq  \ \sigma(s).
\]
\item If $\dim_\text{\emph{A}}  F \leq 1/2$, then  for almost all $t \in (0,1)$
\[
\dim_\text{\emph{A}}  \pi_t  F \ \geq  \ \dim_\text{\emph{A}}   F.
\]
\end{enumerate}
\end{cor}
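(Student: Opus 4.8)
The plan is to derive Corollary~\ref{restricted2} as a direct application of the meta theorem (Theorem~\ref{restricted}) by feeding into it the restricted projection theorem of F\"assler and Orponen. The strategy separates cleanly into two tasks: first, identify the relevant Hausdorff-dimension projection result for the non-degenerate curve $\phi$; second, verify that the ``essinf over the family'' and ``inf over tangents'' on the right-hand side of Theorem~\ref{restricted} collapse to a useful quantity.

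\emph{First} I would recall the F\"assler--Orponen theorem. For a $C^3$ curve $\phi:(0,1)\to S^2$ with $\{\phi(t),\phi'(t),\phi''(t)\}$ spanning $\mathbb{R}^3$, their result states that for any Borel (indeed any) set $E\subseteq\mathbb{R}^3$, for almost every $t\in(0,1)$ one has
\[
\dim_{\mathrm{H}} \pi_t E \ \geq \ \min\{\dim_{\mathrm{H}} E, \tfrac{1}{2}\} + \delta(\dim_{\mathrm{H}} E),
\]
or in the regime $\dim_{\mathrm{H}}E\leq 1/2$ simply $\dim_{\mathrm{H}}\pi_t E \geq \dim_{\mathrm{H}}E$. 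Crucially, because I intend to plug in a \emph{weak tangent} $E$, I need the Hausdorff-dimension statement rather than an Assouad one; and I need to know that the dimension increment $\sigma(s)>1/2$ in the high-dimensional case can be taken uniform over the almost-every-$t$ set. I would cite the precise form from \cite{FassOrp}, extracting the function $\sigma(s)$ as their quantitative lower bound.

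\emph{Next} I would apply Theorem~\ref{restricted} with $d=3$, $k=1$, $n=1$, $P=(0,1)$. The theorem gives
\[
\dim_{\mathrm{A}} \pi_t F \ \geq \ \inf_{\substack{E\in\mathcal{K}(\mathbb{R}^3):\\ \dim_{\mathrm{H}}E=\dim_{\mathrm{A}}F}} \ \underset{s\in(0,1)}{\mathrm{essinf}} \ \dim_{\mathrm{A}}\pi_s E
\]
for almost every $t$. Since every competitor $E$ in the infimum is a \emph{compact} set with $\dim_{\mathrm{H}}E=\dim_{\mathrm{A}}F$, I can bound its Assouad dimension below by its Hausdorff dimension, $\dim_{\mathrm{A}}\pi_s E \geq \dim_{\mathrm{H}}\pi_s E$, and then invoke the F\"assler--Orponen bound applied to $E$. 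In case (1), writing $s=\dim_{\mathrm{A}}F>1/2$, this yields $\mathrm{essinf}_s \dim_{\mathrm{A}}\pi_s E \geq \sigma(s)$ for \emph{every} admissible $E$ (since $\dim_{\mathrm{H}}E=s$ for all of them), so the infimum over $E$ is also at least $\sigma(s)>1/2$. In case (2), $\dim_{\mathrm{A}}F\leq 1/2$ forces $\dim_{\mathrm{H}}E\leq 1/2$, and the subcritical F\"assler--Orponen bound gives $\dim_{\mathrm{H}}\pi_s E\geq \dim_{\mathrm{H}}E=\dim_{\mathrm{A}}F$ for a.e.\ $s$, hence $\mathrm{essinf}_s\dim_{\mathrm{A}}\pi_s E\geq\dim_{\mathrm{A}}F$ uniformly in $E$.

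\emph{The main obstacle} I anticipate is matching the quantifiers correctly: the F\"assler--Orponen theorem provides, for each fixed $E$, a full-measure set of good directions $t$, but Theorem~\ref{restricted} already packages an essinf over $s\in P$ \emph{inside} the infimum over tangents $E$, so I must ensure that the ``a.e.\ $t$'' exceptional set is handled by the meta theorem itself and not re-introduced. The clean point is that the right-hand side of Theorem~\ref{restricted} is a single number (no free $t$), computed as $\inf_E \mathrm{essinf}_s \dim_{\mathrm{A}}\pi_s E$; I only need a \emph{lower bound} on this number that is uniform over $E$, which both cases supply because $\dim_{\mathrm{H}}E$ is pinned to $\dim_{\mathrm{A}}F$ for every competitor. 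A secondary subtlety is confirming that $\sigma(s)$ can be chosen strictly above $1/2$ and depending only on $s$; this is exactly the quantitative content of \cite{FassOrp}, so invoking their explicit bound closes the gap.
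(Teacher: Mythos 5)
Your overall strategy is exactly the paper's: apply Theorem \ref{restricted} with $d=3$, $k=1$, $P=(0,1)$, and bound its right-hand side uniformly over the compact competitors $E$ (whose Hausdorff dimension is pinned to $\dim_{\mathrm{A}}F$) by the F\"assler--Orponen restricted projection results; your quantifier analysis (the essinf sits inside the infimum, and the bound is uniform over competitors because $\dim_{\mathrm{H}}E=\dim_{\mathrm{A}}F$ for every $E$) is sound and is what the paper's one-line proof implicitly does. Case 2 matches the paper precisely: \cite[Proposition 1.5]{FassOrp} plus $\dim_{\mathrm{A}}\geq\dim_{\mathrm{H}}$.

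In case 1, however, you cite a result that F\"assler and Orponen did not prove. Their Theorem 1.7 states that for an analytic set $E\subseteq\mathbb{R}^3$ with $\dim_{\mathrm{H}}E=s>1/2$, the \emph{packing} dimension of $\pi_t E$ is almost surely at least $\sigma(s)>1/2$; the corresponding Hausdorff-dimension improvement beyond $1/2$ was an open problem at the time, which is exactly why the paper quotes the packing version. Your stated reason for insisting on the Hausdorff form --- that you are plugging in a weak tangent --- is a red herring: all you need is some notion of dimension that lower-bounds the Assouad dimension of the projected compact sets, and packing dimension serves since $\dim_{\mathrm{H}}\leq\dim_{\mathrm{P}}\leq\dim_{\mathrm{A}}$. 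The repair is one line: in case 1 replace the inequality $\dim_{\mathrm{A}}\pi_s E\geq\dim_{\mathrm{H}}\pi_s E$ by $\dim_{\mathrm{A}}\pi_s E\geq\dim_{\mathrm{P}}\pi_s E$ and invoke \cite[Theorem 1.7]{FassOrp} as actually stated; everything else in your argument goes through unchanged. (A minor further caveat: the F\"assler--Orponen theorems are for analytic sets, so your parenthetical ``indeed any'' is unjustified, though harmless here because every competitor $E$ in Theorem \ref{restricted} is compact.)
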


\begin{proof}
This follows immediately from Theorem \ref{restricted} and results from \cite{FassOrp}.  For case {\it 1.}, F\"assler and Orponen proved that if an analytic  set $E \subseteq \mathbb{R}^3$ has Hausdorff dimension $s>1/2$, then the packing dimension (and thus the Assouad dimension) of $ \pi_t  E$ is almost surely bigger than a constant $\sigma(s)>1/2$, see  \cite[Theorem 1.7]{FassOrp}.  For case {\it 2.}, see \cite[Proposition 1.5]{FassOrp}. 
\end{proof}

\section{Remaining proofs}

\subsection{Details of Example \ref{notangentexample}: tangents are not enough} \label{notangentexampleproof}

Let $F \subseteq [0,1]$ be given by
\[
F = \{0\} \ \cup \ \bigcup_{k=1}^\infty \bigcup_{l=0}^k \left\{2^{-k} + l 4^{-k} \right\}.
\]
For each $k$ let $T_k$ be the similarity defined by $T_k(x) = k^{-1}4^k (x -2^{-k})$ and observe that
\[
T_k(F) \cap [0,1] = \bigcup_{l=0}^k \left\{ l /k \right\} \to [0,1]
\]
in $d_{\mathcal{H}}$ as $k \to \infty$.  Therefore $[0,1]$ is a weak tangent to $F$ and we may conclude that $\dim_\mathrm{A} F = 1$. Since $F$ only has one accumulation point (at the origin) we only have to consider tangents at this point.  Indeed, any tangent to $F$ at an isolated point is clearly a singleton and has Assouad dimension equal to zero.   As such, suppose $E$ is a tangent to $F$ at 0.  That is, there exists a sequence of similarity maps $S_n$ $(n \geq 1)$ of the form $S_n = c_n x$ where $1<c_n \nearrow \infty$ as $n \to \infty$  such that
\[
S_n(F) \cap B(0,1) \to E
\]
in $d_{\mathcal{H}}$ as $n \to \infty$.   Here we have assumed without loss of generality that the $S_n$ are orientation preserving, which we may do by considering a subsequence and introducing a reflection if necessary.  Let 
\[
F_0 = \{0\} \ \cup \ \bigcup_{k=1}^\infty \left\{2^{-k}  \right\}
\]
and observe that $\dim_\mathrm{A} F_0 = 0$.  We claim that $E$ is also a tangent to $F_0$, which completes the proof since any (weak) tangent to $F_0$ necessarily has Assouad dimension 0. For each $n$ let
\[
m(n) = \min \{ k \geq 1 \ : \ c_n 2^{-k} \leq 1 \}
\]
and note that $m(n) \to \infty$ as $n \to \infty$.  Observe that
\[
S_n(F) \cap B(0,1) \ \subseteq  \ \{0\} \cup  \bigcup_{k=m(n)}^{\infty}  \left[ c_n2^{-k} , \,   c_n2^{-k} + c_n k 4^{-k} \right]
\]
and
\[
S_n(F_0) \cap B(0,1)  \ = \   \{0\}  \cup  \bigcup_{k=m(n)}^{\infty}  \left\{c_n2^{-k}  \right\}
\]
and therefore
\begin{eqnarray*}
d_\mathcal{H} \Big( S_n(F) \cap B(0,1), \, S_n(F_0) \cap B(0,1) \Big)  & \leq & \sup_{k \geq m(n) } c_n k 4^{-k} \\ 
&=& c_n m(n) 4^{-m(n)}  \\ 
& \leq&  m(n) 2^{-m(n)} \to 0.
\end{eqnarray*}
We conclude that  $S_n(F_0) \cap B(0,1) \to E$ in $d_{\mathcal{H}}$  as $n \to \infty$, as required.

\subsection{Distance sets}

\subsubsection{Proof of Theorem \ref{distanceset}} \label{distancesetproof}

The key technical lemma in proving our results on distance sets is the following.  It states that one can pass questions on distance sets to weak tangents.

\begin{lma} \label{disttangent}
Let $F \subseteq \mathbb{R}^d$ be a non-empty closed set and suppose $E$ is a weak tangent to $F$.  Then
\[
\dim_\mathrm{A} D(F) \geq \dim_\mathrm{A} D(E).
\]
\end{lma}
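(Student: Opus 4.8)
The plan is to exhibit a weak tangent $H$ to the closed set $\overline{D(F)}$ that \emph{contains} $D(E)$, and then to combine Proposition \ref{weaktangent} with the monotonicity and closure-stability of the Assouad dimension. Since $\dim_\mathrm{A} D(F) = \dim_\mathrm{A}\overline{D(F)}$ and $D(E)\subseteq H$, this yields
\[
\dim_\mathrm{A} D(F) = \dim_\mathrm{A}\overline{D(F)} \ \geq\ \dim_\mathrm{A} H \ \geq\ \dim_\mathrm{A} D(E),
\]
as required.

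The two facts that drive everything are as follows. First, similarities act on distance sets by simple scaling: if $T(x) = cO(x)+t$ has ratio $c>0$, then $|T(x)-T(y)| = c|x-y|$ because $O$ preserves the Euclidean norm, so $D(T(A)) = cD(A)$ for every $A$, independently of $O$ and $t$. Secondly, the map $A\mapsto D(A)$ is continuous on $\mathcal{K}(X)$: the distance function $(x,y)\mapsto|x-y|$ is uniformly continuous on the compact set $X\times X$, and $A_k\to A$ in $d_\mathcal{H}$ forces $A_k\times A_k\to A\times A$, so the continuous images converge. Let $X$, the similarities $T_k$ with ratios $c_k$, and $d_\mathcal{H}(E,T_k(F)\cap X)\to0$ realise $E$ as a weak tangent. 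Set $Y=[0,\diam X]$, let $S_k\colon\mathbb{R}\to\mathbb{R}$ be $S_k(x)=c_kx$, and write $G_k = D(T_k(F)\cap X)$ and $H_k = S_k(\overline{D(F)})\cap Y$. By continuity $G_k\to D(E)$, and since $T_k(F)\cap X\subseteq T_k(F)$ every distance in $G_k$ is a distance in $T_k(F)$, giving $G_k\subseteq c_kD(F)\subseteq S_k(\overline{D(F)})$; as these distances lie in $Y$ we obtain $G_k\subseteq H_k$, with each $H_k$ compact and, for $k$ large, nonempty (indeed $0\in G_k$).

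Because $Y$ is compact, $\mathcal{K}(Y)$ is compact, so passing to a subsequence gives $H_{k_j}\to H$ for some compact $H\subseteq Y$; by construction $H$ is a weak tangent to $\overline{D(F)}$ with reference set $Y$. Letting $j\to\infty$ in $G_{k_j}\subseteq H_{k_j}$ while $G_{k_j}\to D(E)$ yields $D(E)\subseteq H$, and the display above completes the argument. The main obstacle — and the reason one cannot simply assert that $D(E)$ is itself a weak tangent to $\overline{D(F)}$ — is that the inclusion $G_k\subseteq H_k$ is genuinely strict: a scaled distance $c_k|x-y|$ lying in $Y$ need not be realised inside $X$, since it is not enough for $T_k(x)$ and $T_k(y)$ to be close, both must actually land in $X$. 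Thus $H_k$ may carry mass far from $D(E)$ and we recover only the inclusion $D(E)\subseteq H$; the monotonicity of the Assouad dimension is precisely what absorbs this loss.
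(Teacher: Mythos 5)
Your proposal is correct and follows essentially the same route as the paper: both scale $D(F)$ by the tangent ratios $c_k$, intersect with $[0,\diam X]$, extract a subsequential Hausdorff limit that is a weak tangent to $D(F)$ (you work with $\overline{D(F)}$, which tidies a closure point the paper glosses over), and show this limit contains $D(E)$ before concluding by monotonicity of Assouad dimension. The only cosmetic difference is that you establish the containment via continuity of $A \mapsto D(A)$ on $\mathcal{K}(X)$ and passing the inclusions $G_{k_j}\subseteq H_{k_j}$ to the limit, whereas the paper argues pointwise by extracting pairs $x_k,y_k \in T_k(F)\cap X$ converging to $x,y$.
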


\begin{proof}
Since $E$ is a weak tangent to $F$, we may find a non-empty compact set $X\subseteq \mathbb{R}^d$ and a sequence of similarity maps $T_k: \mathbb{R}^d \to \mathbb{R}^d$ such that
\begin{equation} \label{basicconvA}
T_k(F) \cap X \to E
\end{equation}
in $d_{\mathcal{H}}$ as $k \to \infty$.  We may clearly assume that $X$ is not a single point and we write $ |X| \in (0,\infty)$ for the diameter of $X$.  Also, for each $k$, write $c_k \in (0,\infty)$ for the similarity ratio of $T_k$.  Consider the sequence of compact sets given by
\[
c_k D(F) \cap [0, |X| ]
\]
where $c_k D(F) = \{c_k z : z \in D(F)\}$ and take a strictly increasing infinite sequence of integers $(k_n)_{n >0}$ such that $c_{k_n} D(F) \cap [0, |X| ]$ converges  in $d_{\mathcal{H}}$ to a compact set $B$.  We may do this since $(\mathcal{K}([0, |X|]), d_\mathcal{H})$ is compact.  In particular, $B$ is a weak tangent to $D(F)$ and so
\[ 
\dim_\mathrm{A} D(F) \geq \dim_\mathrm{A} B.
\]
Thus to complete the proof it suffices to show that $D(E) \subseteq B$.  Let $z = |x-y| \in D(E)$ for some $x,y \in E$.  It follows from (\ref{basicconvA}) that we can find a sequence of pairs  $x_k,y_k \in T_k(F) \cap X$ such that $x_k \to x$ and $y_k \to y$.  For each $k$ we have $T_k^{-1}(x_k),  T_k^{-1}(y_k) \in F$ and so 
\[
c_k^{-1} |x_k-y_k  | =  |T_k^{-1}(x_k) - T_k^{-1}(y_k)  | \in D(F) .
\]
Moreover, $|x_k-y_k  | \leq  |X|$ and therefore
\[
 |x_k-y_k  |  \in c_k D(F) \cap [0, |X|].
\]
It  follows that
\[
z = |x-y| = \lim_{k \to \infty}  |x_{k}-y_{k}  |   = \lim_{n \to \infty}  |x_{k_n}-y_{k_n}  | \in B
\]
which completes the proof.
\end{proof}

We are now ready to prove Theorem \ref{distanceset}.  Let $F \subseteq \mathbb{R}^2$ be a closed set with $\dim_\mathrm{A} F= s  > 1$. We will deal with the non-closed case at the end of the proof.

It follows from Theorem \ref{goodweaktangent} that $F$ has a weak tangent $E$ such that $\mathcal{H}^s(E) >0$.  We may take a compact subset $E' \subseteq E$ with positive and finite $\mathcal{H}^s$ measure, see \cite[Theorem 4.10]{falconer}, and define
\[
\nu = \frac{1}{\mathcal{H}^s(E')} \mathcal{H}^s \vert_{E'}
\]
to be the normalised restriction of $\mathcal{H}^s$ to $E'$.  Thus $\nu$ is a Borel probability measure supported on a compact set  $E'$ of positive $\mathcal{H}^s$ measure. Without loss of generality we may assume that $E'$ is contained in $[0,1]^2$ and note that the $\nu$ measure of the boundary of $[0,1]^2$ (or any square) is zero.  We will now employ the theory of CP-chains, which were introduced by Furstenberg in the seminal paper \cite{Furstenberg} building on his earlier work from the 1960s, see \cite{Furstenberg60s}.  The theory has recently been developed by Hochman \cite{Hochman} and Hochman-Shmerkin \cite{HochmanShmerkin} and has proved a powerful tool in studying many geometric problems.  The idea is to apply ideas from ergodic theory to the measure valued  process generated by zooming in at a point in the support of a fractal measure. 

Let $\mathcal{M}$ denote the collection of all  Borel probability measure supported on $[0,1]^2$, endowed with the topology of weak  convergence.  Let $\mathcal{E}$ be the collection of all half open dyadic boxes contained in $[0,1)^2$ oriented with the coordinate axes.  By half open dyadic box we mean a set of the form $[a,b) \times [c,d)$ where both $[a,b)$ and $[c,d)$ are dyadic intervals of the same length.  For $x \in [0,1)^2$, write $\Delta^k(x)$ to denote the unique $k$th generation box in $\mathcal{E}$ containing $x$ (where `$k$th generation' refers to those boxes of sidelength $2^{-k}$ in the dyadic filtration $\mathcal{E}$). For $B \in \mathcal{E}$, let $T_B : \mathbb{R}^2 \to \mathbb{R}^2$ be the unique rotation and reflection free similarity that maps $B$ onto $[0,1)^d$.  For $\mu \in \mathcal{M}$ and $B \in \mathcal{E}$ such that  $\mu(B) > 0$, we write
\[
\mu^B \ = \ \frac{1}{\mu(B)} \mu|_B \circ T_B^{-1} \ \in \  \mathcal{M}.
\]
The measures $\mu^{\Delta^k(x)}$ are called (dyadic) \emph{minimeasures} (at $x$) and weak limits of sequences of minimeasures where the level $k \to \infty$  are called (dyadic) \emph{micromeasures} (at $x$).  We denote the set of all micromeasures of $\mu$ by $\text{Micro}(\mu)$.  A \emph{CP-chain} is a stationary Markov process $(\mu_n,x_n)_{n=1}^\infty$ on the state space
\[
 \{ (\mu,x) \ : \ \mu \in \mathcal{M} , \, x \in [0,1)^2, \, \text{and for all $k \in \mathbb{N}$, } \mu(\Delta^k(x))>0\}
\]
where the transition probability is given by
\[
(\mu,x) \to \left(\mu^{\Delta^1(x)}, \, T_{\Delta^1(x)}(x)\right)
\]
with probability $\mu(\Delta^1(x))$.  There is a minor technical issue here if $\mu$ gives positive measure to the boundary of the dyadic boxes in $\mathcal{E}$, but we can omit discussion of this since we will apply the theory to $\nu$ which does not have this property. For convenience we assume from now on that $\mu$ gives zero measure to the boundary of all dyadic boxes.   The measure component of the stationary distribution for the  process described above is a measure $Q$ supported on $\mathcal{M}$ and is called a \emph{CP-distribution}.  A CP-chain is said to be ergodic if $Q$ is ergodic. We say a measure $\mu \in \mathcal{M}$ \emph{generates} a CP-chain with measure component $Q$ if at $\mu$ almost every $x \in [0,1]^2$, the scenery distributions
\[
\frac{1}{N} \sum_{k=1}^N \delta_{\mu^{\Delta^k(x)}}
\]
converge weakly to $Q$ as $N \to \infty$ and for every $q \in \mathbb{N}$, there exists a distribution $Q_q$ on $\mathcal{M}$ such that at $\mu$ almost every $x \in [0,1]^2$, the $q$-sparse scenery distributions
\[
\frac{1}{N} \sum_{k=1}^N \delta_{\mu^{\Delta^{qk}(x)}}
\]
converge weakly to $Q_q$ as $N \to \infty$. Here the distributions $Q$ and $Q_q$ are necessarily supported on the micromeasures $\text{Micro}(\mu)$.  We refer the reader to \cite[Section 7]{HochmanShmerkin} for more details on CP-chains. They have proved to be of central importance in several problems on geometric measure theory in the last few years and, in particular, measures which generate ergodic CP-chains have many useful properties.   We will use the following result of Ferguson-Fraser-Sahlsten \cite[Theorem 1.7]{FFS} which relates CP-chains to distance sets. Recall that the (lower) Hausdorff dimension of a measure is defined by $ \dim_\mathrm{H}\mu  = \inf \{  \dim_\mathrm{H} F : \mu(F)>0\}$ and note that $\dim_\mathrm{H} \nu = s$.

\begin{thm}[Ferguson-Fraser-Sahlsten] \label{FFSthm}
Let $\mu \in \mathcal{M}$ be a measure which generates an ergodic CP-chain and is supported on a set $X$ of positive length, i.e. $\mathcal{H}^1(X)>0$.  Then
\[
\dim_\mathrm{H} D(X) \geq \min\{1, \dim_\mathrm{H}\mu \}.
\]
\end{thm}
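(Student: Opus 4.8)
The plan is to realise the distance set, or rather a pinned version of it, as the support of a pushforward of $\mu$ under the Euclidean distance map, and then to exploit the fact that, after zooming in via the CP-structure, this nonlinear map becomes indistinguishable from an orthogonal projection. This converts the problem into a projection theorem for measures generating ergodic CP-chains, which is precisely the setting in which the Hochman--Shmerkin machinery recovers the Marstrand lower bound $\min\{1,\dim_{\mathrm{H}}\mu\}$ in \emph{every} direction, up to a small exceptional set.

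First I would reduce to pinned distance sets. For a basepoint $a$ in the support of $\mu$ put $\Delta_a(x) = |x-a|$, so that the pinned distance set $\Delta_a(X)$ is contained in $D(X)$; it therefore suffices to exhibit one $a$ for which $\dim_{\mathrm{H}} \Delta_a(X) \geq \min\{1,\dim_{\mathrm{H}}\mu\}$. Since $(\Delta_a)_*\mu$ is supported on $\Delta_a(X)$, it is enough to bound below the Hausdorff dimension of this pushforward measure, which in turn follows from a $\mu$-almost everywhere lower bound on its lower local dimension. The hypothesis $\mathcal{H}^1(X)>0$ enters at this stage: it guarantees that $a$ may be chosen so that $X$ is not supported on a single circle centred at $a$ (which would collapse $\Delta_a(X)$ to a point).

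The core step is to linearise $\Delta_a$ through the scenery. Away from $a$ the distance map is a smooth submersion whose derivative at $x$ is the orthogonal projection $\pi_{\hat u(x)}$ onto the radial direction $\hat u(x) = (x-a)/|x-a|$. Renormalising $\mu$ on a small dyadic box $B \ni x_0$ by $T_B$ and correspondingly rescaling $\Delta_a$, the rescaled map converges to the \emph{linear} projection $\pi_{\hat u(x_0)}$; hence every micromeasure of $(\Delta_a)_*\mu$ produced by zooming at $\Delta_a(x_0)$ coincides with a projection $\pi_{\hat u(x_0)}\nu$ of a micromeasure $\nu$ of $\mu$ at $x_0$. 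Feeding this into the local entropy averages of Hochman--Shmerkin, the lower local dimension of $(\Delta_a)_*\mu$ at $\Delta_a(x_0)$ is bounded below by the CP-average of $\dim \pi_{\hat u(x_0)}\nu$ over the micromeasures $\nu$. Because $\mu$ generates an ergodic CP-chain, the Hochman--Shmerkin projection theorem yields $\dim \pi_\theta \mu \geq \min\{1,\dim_{\mathrm{H}}\mu\}$ for every direction $\theta$ outside an exceptional set $\Theta$ of Hausdorff dimension $0$. It remains to check that $\hat u(x_0) \notin \Theta$ for $\mu$-almost every $x_0$: the preimage $\{x : \hat u(x) \in \Theta\}$ is a union of rays emanating from $a$, hence a set of Hausdorff dimension at most $1$, which is $\mu$-null whenever $\dim_{\mathrm{H}}\mu>1$ (the case giving target value $1$); in the remaining case $\dim_{\mathrm{H}}\mu \leq 1$ the only obstruction is $\mu$ concentrating on such a ray, where $\Delta_a$ is already essentially affine and the bound is immediate. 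Combining these gives the local dimension bound, and hence $\dim_{\mathrm{H}} \Delta_a(X) \geq \min\{1,\dim_{\mathrm{H}}\mu\}$.

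The main obstacle I anticipate is making the linearisation rigorous while keeping the dimension estimate uniform over the \emph{varying} radial direction $\hat u(x)$. A single nonlinear map has no intrinsic ``projected dimension'', and a naive appeal to Marstrand would only control almost every direction, which is worthless here because the directions $\hat u(x)$ are slaved to the points of the set rather than chosen freely. The feature that rescues the argument is exactly the strength of the Hochman--Shmerkin theorem for ergodic CP-chains: the projection bound holds with an exceptional set that is merely zero-dimensional rather than merely null in $G_{2,1}$, which is robust enough to absorb the continuum of directions thrown up by the distance map. Quantitatively, one must also control the $C^1$-error between $\Delta_a \circ T_B^{-1}$ and its linearisation and show it is negligible at the renormalisation scale uniformly along the chain; this estimate, combined with the ergodic theorem applied to the CP-distribution $Q$, is what upgrades the pointwise linearisation into the stated global dimension bound.
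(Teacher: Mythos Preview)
The paper does not prove this theorem; it is quoted verbatim as an external result, namely \cite[Theorem 1.7]{FFS}, and used as a black box in the proof of Theorem~\ref{distanceset}. There is therefore no ``paper's own proof'' to compare against.

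That said, your sketch is the correct strategy and is essentially the one carried out in \cite{FFS}: reduce to pinned distance sets, linearise the distance map $\Delta_a$ so that along the CP-scenery it is approximated by the radial projection $\pi_{\hat u(x)}$, and then invoke the Hochman--Shmerkin projection theorem for measures generating ergodic CP-chains, which gives the Marstrand lower bound for \emph{all} directions outside a set of Hausdorff dimension zero. The crucial point you identify --- that an almost-every-direction statement would be useless here, and one genuinely needs the zero-dimensional exceptional set --- is exactly right and is the heart of the argument.

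Two places where your outline is a little loose. First, the role of the hypothesis $\mathcal{H}^1(X)>0$ is not merely to rule out $X$ lying on a single circle about $a$; in \cite{FFS} it is used (via a density-point argument) to select a basepoint $a$ and a portion of $X$ near $a$ on which the radial direction map $x \mapsto \hat u(x)$ has image of positive $\mathcal{H}^1$-measure in $S^1$, so that the zero-dimensional exceptional set of directions is certainly avoided on a set of positive $\mu$-measure. Your dichotomy according to whether $\dim_{\mathrm{H}}\mu>1$ or $\leq 1$ does not quite do this job: when $\dim_{\mathrm{H}}\mu \leq 1$ the set $\{x:\hat u(x)\in\Theta\}$ could in principle carry all of $\mu$, and ``$\mu$ concentrating on a ray'' is not the only failure mode. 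Second, the quantitative $C^1$-linearisation estimate you flag at the end is indeed needed and is handled in \cite{FFS} by working with local entropy averages and showing the nonlinear perturbation contributes $o(1)$ to the entropy at scale $n$; this is routine but not automatic.
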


 The dimension of a CP-chain is the average of the dimensions of micromeasures with respect to the measure component of the chain,  i.e.
\[
\int \dim_{\H} \nu \, d Q(\nu),
\]
but for an ergodic CP-chain the micromeasures are almost surely exact dimensional with a common `exact dimension', \cite[Lemma 7.9]{HochmanShmerkin}. Recall that a measure $\mu$ is called \emph{exact dimensional} if the local dimension 
\[
\lim_{r \to 0} \frac{\log \mu(B(x,r))}{\log r}
\]
exists and equals some constant $\alpha$ at almost every point in the support.   In this case, we also have $ \dim_\mathrm{H}\mu = \alpha$.

Hochman and Shmerkin proved that for \emph{any} $\mu \in \mathcal{M}$, there exists an ergodic CP-chain whose measure component $Q$ is supported on $\text{Micro}(\mu)$ and has dimension at least $\dim_\H \mu$, see \cite[Theorem 7.10]{HochmanShmerkin}. Moreover, \cite[Theorem 7.7]{HochmanShmerkin} tells us that $Q$-almost all micromeasures generate this CP-chain.   In particular, for $\nu$ defined above we can guarantee the existence of a micromeasure $\nu' \in \text{Micro}(\nu)$ which generates an ergodic CP-chain of dimension at least $s$ and satisfies  $\dim_\mathrm{H}\nu'  \geq s > 1$.  This guarantees that the support of $\nu'$ has positive length.     Putting these facts together, there exists a micromeasure $\nu'$ of $\nu$ which generates an ergodic CP-chain and is supported on a set $X$ of positive length.  We now require the following simple general lemma.
\begin{lma} \label{startang}
Let $Z \subseteq \mathbb{R}^d$ be a fixed compact set and $\mu_k$ be a sequence of Borel probability measures with supports denoted by $Y_k \subseteq Z$ such that $\mu_k$ weakly converges to $\mu$ and $Y_k$ converges to $Y$ in the Hausdorff metric.  Then the support of $\mu$ is a subset of $Y$.
\end{lma}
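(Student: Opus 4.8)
The plan is to establish the containment $\operatorname{supp}(\mu) \subseteq Y$ by arguing through the complement: I would fix an arbitrary point $x \in \mathbb{R}^d \setminus Y$ and show that $x$ cannot lie in the support of $\mu$, i.e. that $x$ has an open neighbourhood of zero $\mu$-measure. Note first that since each $Y_k \subseteq Z$ and $Z$ is compact, the Hausdorff limit $Y$ is again a non-empty compact subset of $Z$; in particular $Y$ is closed, so the distance $d(x,Y)$ is a well-defined positive number and we may write $d(x,Y) = 3\delta > 0$.

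The first key step is purely metric. For any $z \in B(x,\delta)$ the triangle inequality gives $d(z,Y) \geq d(x,Y) - |x-z| > 2\delta$, so the open ball $B(x,\delta)$ is disjoint from the $2\delta$-neighbourhood $Y_{2\delta}$ of $Y$. The convergence $Y_k \to Y$ in $d_{\mathcal{H}}$ then supplies an index $K$ with $d_{\mathcal{H}}(Y_k, Y) < \delta$, and hence $Y_k \subseteq Y_\delta \subseteq Y_{2\delta}$, for all $k \geq K$. Consequently $Y_k \cap B(x,\delta) = \emptyset$ for all $k \geq K$. Because $Y_k$ is the support of $\mu_k$ and $B(x,\delta)$ is open, this immediately forces $\mu_k(B(x,\delta)) = 0$ for every $k \geq K$.

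The second step transfers this vanishing to $\mu$ via weak convergence. By the portmanteau theorem, $\mu_k \to \mu$ weakly yields the lower-semicontinuity estimate $\mu(U) \leq \liminf_{k\to\infty} \mu_k(U)$ for every open set $U$; applying this with $U = B(x,\delta)$ gives $\mu(B(x,\delta)) \leq \liminf_{k\to\infty}\mu_k(B(x,\delta)) = 0$. Thus $x$ admits an open neighbourhood of zero $\mu$-measure, so $x \notin \operatorname{supp}(\mu)$, and since $x \in \mathbb{R}^d \setminus Y$ was arbitrary we conclude $\operatorname{supp}(\mu) \subseteq Y$. The argument is otherwise routine; the only point requiring care is invoking the correct direction of the portmanteau inequality — open sets give a $\liminf$ \emph{lower} bound, which is exactly what allows a vanishing sequence of measures to force $\mu$ itself to vanish — together with the elementary bookkeeping that keeps $B(x,\delta)$ clear of every $Y_k$ for all large $k$.
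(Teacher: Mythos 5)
Your proof is correct and rests on essentially the same ingredients as the paper's: a ball kept disjoint from the supports $Y_k$ for large $k$, the fact that this forces $\mu_k$ of the ball to vanish, and the portmanteau inequality $\mu(U) \leq \liminf_k \mu_k(U)$ for open $U$. The only difference is organizational — you argue contrapositively from a point $x \notin Y$, whereas the paper argues by contradiction from a point of $\operatorname{supp}(\mu)$, first showing that the one-sided Hausdorff distance $\rho_\mathcal{H}(\operatorname{supp}(\mu), Y_k)$ tends to $0$ and then invoking the triangle inequality with $d_\mathcal{H}(Y_k, Y)$.
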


\begin{proof}
For $A,B \in \mathcal{K}(Z)$, let
\[
\rho_\mathcal{H} (A,B) = \inf \{ \delta >0 \ : \   A \subseteq B_\delta \}.
\]
Write $\text{supp}(\mu)$ for the support of $\mu$.  We claim that $\rho_\mathcal{H} (\text{supp}(\mu), Y_k)  \to 0$ as $k \to \infty$.  Suppose not, in which case there exists $\varepsilon>0$ and $x \in \text{supp}(\mu)$ such that there exist arbitrarily large $k$ such that
\[
Y_k \cap B(x,\varepsilon) = \emptyset
\]
where $B(x,\varepsilon)$ denotes the open ball centred at $x$ with radius $\varepsilon$.  Therefore
\[
\mu(B(x,\varepsilon)) \leq \liminf_{k \to \infty} \mu_k(B(x,\varepsilon)) = 0
\]
which is a contradiction. We conclude that
\[
\rho_\mathcal{H} (\text{supp}(\mu), Y) \leq \rho_\mathcal{H} (\text{supp}(\mu), Y_k)  + d_\mathcal{H} (Y_k, Y)  \to 0
\]
and the desired result follows.
\end{proof}

Note that in the above lemma, one may not conclude that the support of $\mu$ is \emph{equal} to $Y$.  The sequence $\mu_k = (1/k) \mathcal{H}^1 \vert_{[0,1]} + (1-1/k) \delta_0$ provides a counter example, where $\delta_0$ denotes a point mass at 0.

Since $\nu'$ is a micromeasure of $\nu$,  it is the weak  limit of a sequence of minimeasures $\nu^{B_k}$ (where $B_k$ is a dyadic square) which are supported on  $T_{B_k}(E') \cap [0,1]^2 \in \mathcal{K}([0,1]^2)$.  Since  $\mathcal{K}([0,1]^2)$ is compact we can assume the sequence $T_{B_k}(E') \cap [0,1]^2 $ converges in $d_\mathcal{H}$ to a non-empty compact set $E'' \subseteq [0,1]^2$, which is therefore a weak tangent to $E'$.  It follows from Lemma \ref{startang} that $ X = \text{supp}(\nu')  \subseteq E''$.

 The desired result now follows by piecing together the above:
\begin{eqnarray*}
\dim_\mathrm{A} D(F) & \geq&  \dim_\mathrm{A} D(E) \qquad \text{by Lemma \ref{disttangent} since $E$ is a weak tangent to $F$} \\ 
& \geq& \dim_\mathrm{A} D(E') \qquad \text{since $E' \subseteq E$} \\
& \geq& \dim_\mathrm{A} D(E'') \qquad \text{by Lemma \ref{disttangent} since $E''$ is a weak tangent to $E'$}  \\
& \geq& \dim_\mathrm{A} D(X )  \qquad \text{since $X \subseteq E''$} \\
& \geq& \dim_\mathrm{H} D(X)  \\
&\geq& \min\{1, \dim_\mathrm{H}\nu' \} \qquad  \text{by Theorem \ref{FFSthm}} \\
&=& 1
\end{eqnarray*}
which completes the proof.

All that remains is the case when $F$ is not closed.  However, $\dim_\mathrm{A}\overline{F} = \dim_\mathrm{A} F >  1$  and $\overline{D(F)} \supseteq D( \overline{F}) $.  Therefore, using the result in the closed case, we have
\[
\dim_\mathrm{A} D(F)  =  \dim_\mathrm{A} \overline{D(F)} \geq  \dim_\mathrm{A}D( \overline{F}) = 1
\]
as required.

\subsubsection{Proof of Theorem \ref{distanceset2}} \label{distanceset2proof}

This theorem follows immediately by combining Theorem \ref{goodweaktangent}, Lemma \ref{disttangent} and the results of Falconer and Erdo\u{g}an  on Hausdorff dimension.  Let $F \subseteq \mathbb{R}^d$ be any non-empty set.  Using  Theorem \ref{goodweaktangent} we can find a compact (and therefore Borel) set $E \subseteq \mathbb{R}^d$ which is a weak tangent to $\overline{F}$ with $ \dim_\mathrm{H} E = \dim_\mathrm{A} F =\dim_\mathrm{A} \overline{F}$.  The desired result then follows from Lemma \ref{disttangent} since
\begin{eqnarray*}
\dim_\mathrm{A} D(F)  =  \dim_\mathrm{A} \overline{D(F)} \geq  \dim_\mathrm{A}D( \overline{F}) \geq \dim_\mathrm{A} D(E)  \geq  \inf_{\substack{E'  \,  \in  \, \mathcal{K}(  \mathbb{R}^d ) : \\
\dim_{\mathrm{H}}E' \, = \,  \dim_{\mathrm{A}} F}} \dim_\text{A} D( E'),
\end{eqnarray*}
as required.  For the quantitative result, the well-known results of Falconer and Erdo\u{g}an, see \cite[Chapter 15]{MattilaFourier}, yield
\[
\dim_\mathrm{A} D(F)   \geq  \dim_\mathrm{A} D( E)  \geq \dim_\mathrm{H} D( E)  \geq \max  \left\{ \frac{6 \dim_\mathrm{A} F+2-3d}{4}, \, \dim_\mathrm{A} F-\frac{d-1}{2} \right\}
\]
provided $\dim_\mathrm{H} E = \dim_\mathrm{A} F < d/2 + 1/3$ and $\dim_\mathrm{H} D( E)  = 1$ otherwise.

\subsubsection{Details of Example \ref{distancesetexamples2} and proof of Theorem \ref{selfsimthm}} \label{distancesetexamples2proof}

We begin by proving Theorem \ref{selfsimthm} and then we will adapt the construction of a self-similar set to satisfy the conditions of Example \ref{distancesetexamples2}.  Let $F \subset \mathbb{R}^d$ be a self-similar set which is not a single point.  Let $x,y \in F$ be distinct points and let $\Delta : = |x-y| >0$.  Let $S_a$ be one of the defining similarity maps with contraction ratio $a$ and $S_b$ be one of the defining similarity maps with contraction ratio $b$.  For all integers $m,n \geq 0$ we have that $S_a^m \circ S_b^n(x), \, S_a^m \circ S_b^n(y) \in F$ and therefore
\[
|S_a^m \circ S_b^n(x) - S_a^m \circ S_b^n(y) | = a^mb^n \Delta \in D(F).
\]
 The proof of Theorem \ref{selfsimthm} is now similar to the example in \cite[Section 3.1]{Fraser}, but we include the details for completeness. We will show that $[0,1]$ is a weak tangent to $D(F)$ which proves  the theorem.  For each integer $k \geq 1 $ let $T_k: [0,1] \to [0,1]$ be defined by $T_k(x) = \Delta^{-1} b^{-k} x$ and, using compactness of $\mathcal{K}([0,1])$, extract a convergent subsequence of $T_k(D(F)) \cap [0,1]$ in the Hausdorff metric, the limit of which is a weak tangent to $D(F)$.  Since 
\[
\big\{a^mb^n    \ : \ m, n \in \mathbb{Z}, \,  m  \geq 0, n  \geq -k\big\} \cap [0,1]  \ \subset \  T_k(D(F)) \cap [0,1]
\]
for all $k \geq 1$ and the sequence of sets on the left is nested, the weak tangent must contain the set 
\[
\overline{\{a^mb^n : m \in \mathbb{N}, n \in \mathbb{Z} \}} \cap [0,1].
\]
However, it follows almost immediately from the assumption on $a$ and $b$ that this set is simply $[0,1]$.  We will prove the equivalent statement that $\{m\log a +n\log b : m \in \mathbb{N}, n \in \mathbb{Z} \}$ is dense in $(-\infty, 0)$. Indeed
\[
m\log a+n\log b = n\log a \bigg(\frac{m}{n}+ \frac{\log b}{\log a}\bigg)
\]
and by Dirichlet's theorem on Diophantine approximation combined with the irrationality of $\log b/\log a$ we can find infinitely many integers $n \geq 1$ such that
\[
0< \Big\lvert \frac{m}{n}+ \frac{\log b}{\log a} \Big\rvert < 1/n^2
\]
for some integer $m$.  Therefore for any $\varepsilon>0$ we can choose $m,n$ such that
\[
0<\lvert m\log a+n\log b \rvert < \varepsilon
\]
and by scaling $m,n$ by each positive integer in turn we can find infinite (one sided) arithmetic progressions with arbitrarily small gap length inside $\{m\log a +n\log b : m \in \mathbb{N}, n \in \mathbb{Z} \}$ which completes the proof.

We will now show how to build an example with Assouad dimension 0 since the examples provided by Theorem \ref{selfsimthm} all have strictly positive Assouad dimension. This is the content of Example  \ref{distancesetexamples2}.  Fix $a,b \in (0,1)$ with $\log a/ \log b \notin \mathbb{Q}$ as above.  For each  integer $k \geq 1$, consider the IFS $\mathcal{I}(k)$ consisting of the two maps
\[
S_k^1 : x \mapsto a^{2^k} x \qquad \text{ and } \qquad S_k^2 :  x \mapsto b^{2^k} x + (1-b^{2^k})
\]
and let $N(k)$ be a large positive integer which we will specify later.  Let $\theta = (\theta_1, \theta_2, \dots) \in \mathbb{N}^\mathbb{N}$ be the infinite integer sequence defined by beginning with $N(1)$ 1s, and following with $N(2)$ 2s, and so on.  In other words, $\theta$ is the unique word with non-decreasing entries such that for all integers $k$ there are precisely  $N(k)$ occurrences of the integer $k$. Also let $\Phi_k : \mathcal{K}([0,1]) \to  \mathcal{K}([0,1]) $ be defined by the action of the IFS $\mathcal{I}(k)$, i.e.
\[
\Phi_k(X) = S_k^1 (X) \cup S_k^2(X).
\]
Finally, let 
\[
E = \bigcap_{n=1}^\infty \Phi_{\theta_n} \circ \cdots \circ \Phi_{\theta_1} ([0,1])
\]
and observe that $E$ is a non-empty compact subset of $[0,1]$.  For every integer $k \geq 1$ we can decompose $E$ into finitely many pieces, each of which is a subset of the attractor of $\mathcal{I}(k)$.  We can do this because the maps in $\mathcal{I}(k')$ are iterates of maps from $\mathcal{I}(k)$ for any $1 \leq k < k'$.  Since the Assouad dimension is finitely stable, this means that we can bound the Assouad dimension of $E$ by the \emph{similarity dimension} of the attractor of $\mathcal{I}(k)$ for all $k$, see \cite[Chapter 9]{falconer} and \cite{Fraser}.  In particular, the similarity dimension associated with $\mathcal{I}(k)$ is the unique real solution $s(k)$ of 
\[
a^{s(k)2^k} + b^{s(k)2^k} = 1
\]
and so
\[
\dim_\mathrm{A} E  \ \leq \  \inf_{k \geq 1} s(k)  \ = \  0. 
\]
All remains is to prove that we may choose the integers $N(k)$ such that the key feature of the systems we use is preserved, i.e. $\dim_\mathrm{A} D(E)=1 $. 

By construction, for all $k\geq 1$ we have
\[
D(E) \ \supseteq \  \left( \prod_{l=1}^{k-1} a^{2^{(l-1)}N(l-1)} \right)  \left\{ a^{2^km} b^{2^kn } \ : \  \ m, n \in \mathbb{Z}, \,  0 \leq m , n \leq N(k)     \right\}.
\]
These points are found similar to above, but by looking at the left most interval at level $N(1)+ \cdots + N(k-1) $ in the construction, which has length 
\[
\left( \prod_{l=1}^{k-1} a^{2^{(l-1)}N(l-1)} \right),
\]
and then looking at end points of intervals within this interval for the next $N(k)$ levels.  For each $k \geq 1$, let $T_k$ be defined by
\[
T_k(x) = \left( \prod_{l=1}^{k-1} a^{2^{(l-1)}N(l-1)} \right)^{-1} b^{-2^kN(k)/2 }(x)
\]
and assume for convenience that $N(k)$ is even. It follows that
\begin{eqnarray*}
 &\, & T_k(D(E)) \cap [0,1]  \\ \\ 
&\,& \qquad  \supseteq \   \left\{ a^{2^km} b^{2^kn } \ : \  \ m, n \in \mathbb{Z}, \,  0 \leq m \leq N(k), \,   -N(k)/2 \leq n \leq N(k)/2    \right\} \cap [0,1].
\end{eqnarray*}
Now choose $N(k)$ sufficiently large (and even) such that the Hausdorff distance between the set above and the closure of the set 
\[
I  \  : = \ \left\{ a^{2^km} b^{2^kn } \ : \  \ m, n \in \mathbb{Z}, \,  0 \leq m < \infty, \,   -\infty <  n < \infty    \right\} \cap [0,1]
\]
is less than $1/k$.  However, since
\[
\frac{\log a^{2^k}}{\log b^{2^k}} = \frac{\log a}{\log b}  \notin \mathbb{Q},
\]
we have already seen that the closure of $I$ is simply the unit interval $[0,1]$.  It follows that $ T_k(D(E)) \cap [0,1]  \to [0,1]$ in the Hausdorff distance as $k \to \infty$.  In particular, $[0,1]$ is a weak tangent to $D(E)$ and we  conclude that $\dim_\mathrm{A} D(E) = 1$ as required.

\subsubsection{Details of Example \ref{distancesetexamples1}: sharpness of Theorem \ref{distanceset} }\label{distancesetexamples1proof}

Let $s \in [0,1)$ and let $N \geq 2$ be an integer satisfying $(N+1)/2 - N^s >1 $ and $K $ be an integer satisfying $N^s \leq K < (N+1)/2$.  For all $i \in \{0, \dots, K-1\}$ let $S_i : [0,1] \to [0,1]$ be defined by
\[
S_i(x) = (x+2i)/N
\] 
and $F \subseteq [0,1]$ be the self-similar set associated with the IFS $\{S_i\}_{i=0}^{K-1}$.  Since the defining IFS satisfies the open set condition, it follows that
\[
\dim_\mathrm{A} F = \dim_\mathrm{H} F= \frac{\log K}{\log N} \geq s.
\]
Consider the set  $\pi(F \times F)$ where $\pi$ is orthogonal projection onto the subspace of $\mathbb{R}^2$ spanned by $(1,-1)$ identified with $\mathbb{R}$.  This is a self-similar set defined by $2K-1$ equicontractive similarities with common contraction ratio $1/N$.  Moreover, by our choice of indexing, a simple geometric argument shows that the open set condition is satisfied for this system. The map $\phi$ defined by $\phi(x) = |x|$ is  bi-Lipschitz on $(-\infty, 0)$ and $[0, \infty)$ and so cannot increase Assouad dimension (the fact that it is Lipschitz on the whole of $\mathbb{R}$ is not enough to guarantee this, see \cite[Example A.6 2]{Luukkainen} or \cite[Section 3.1]{Fraser}). Moreover, $\phi(\pi(F \times F)) = D(F)$ and therefore
\[
\dim_\mathrm{A} D(F)  \leq \dim_\mathrm{A} \pi(F \times F) \leq  \frac{\log 2K-1}{\log N} < 1
\]
as required.

\subsection{Projections}

\subsubsection{Proof of Theorem \ref{orthoproj}: orthogonal projections} \label{orthoprojproof}

Let $F \subseteq \mathbb{R}^d$ be any  non-empty set, and let $k \in [1,d)$ be an integer.  Since for any $\pi \in G_{d,k}$ we have $\pi(\overline{F}) \subseteq \overline{\pi(F)}$ and the Assouad dimension is stable under taking closure, we may assume that $F$ is closed to begin with.  It follows from Theorem \ref{goodweaktangent} that there exists a compact set $E\subseteq \mathbb{R}^d$ which is a weak tangent to $F$ such that $\dim_{\mathrm{H}} E = \dim_{\mathrm{A}} F$.  In particular, there exists a sequence of similarity maps $T_k$ on $\mathbb{R}^d$ and a compact set $X \subseteq \mathbb{R}^d$ such that
\begin{equation} \label{originalconv}
T_k(F) \cap X \to E
\end{equation}
in $d_\mathcal{H}$ as $k \to \infty$.  Moreover, we may assume that the $T_k$ are homothetic, i.e. of the form $T_k(x) = c_k x + t_k$ for a real constant $c_k>0$ and a translation $t_k \in \mathbb{R}^d$.  In fact the proof of Theorem \ref{goodweaktangent} given in \cite{kaenmakiassouad2} yields homothetic maps directly, but it is also easy to prove from the statement of Theorem \ref{goodweaktangent} in this paper.  Suppose $T_k(x) = c_k O_k x + t_k$ where $O_k \in \mathcal{O}(\mathbb{R}, d)$ is a not necessarily trivial orthogonal component and $\mathcal{O}(\mathbb{R}, d)$ is the real orthogonal group.  Since   $\mathcal{O}(\mathbb{R}, d)$ is compact in the topology of uniform convergence, we may assume that $O_k \to O$ uniformly for some fixed $O \in \mathcal{O}(\mathbb{R}, d)$ by taking a subsequence if necessary.  Using continuity of $O^{-1}$ and (\ref{originalconv}) it is easily verified that
\[
(c_k(F)+O^{-1} (t_k)) \cap O^{-1}(X) \to O^{-1}(E)
\]
and therefore $O^{-1}(E)$ is a weak tangent to $F$ with all the desired properties. 

In what follows it is convenient to identify $\pi (\mathbb{R}^d)$ with $\mathbb{R}^k$ in the natural way.  Define a map $\pi \circ T_k \circ \pi^{-1}$ from  $\pi (\mathbb{R}^d)$  to itself by
\[
\{\pi \circ T_k \circ \pi^{-1}(x)\} = \{ \pi(T_k(y)) : \pi(y) = x\}
\]
and observe that since $T_k$ is assumed to be homothetic this is well-defined, i.e. the set $\{ \pi(T_k(y)) : \pi(y) = x\}$ is a singleton. Moreover, writing $T_k(x) = c_k x +t_k$, we have for $x \in \pi (\mathbb{R}^d)$ that
\[
\pi \circ T_k \circ \pi^{-1}(x) = \pi( c_k \pi^{-1}(x) + t_k) =  c_k x + \pi(t_k)
\]
and so $\pi \circ T_k \circ \pi^{-1}$ is itself a (homothetic) similarity.   Since $\pi : \mathcal{K}(\mathbb{R}^d) \to \mathcal{K}(\mathbb{R}^k) $ is continuous, it follows from (\ref{originalconv})  that
\begin{equation} \label{subsetgood}
\left(\pi \circ T_k \circ \pi^{-1}\right) (\pi(F)) \cap \pi(X)  \ = \ \pi (T_k(F)) \cap \pi(X)  \ \supseteq \    \pi \left( T_k F \cap X \right)   \ \to \ \pi(E)
\end{equation}
in $d_\mathcal{H}$ as $k \to \infty$.  Note that $\pi(X)$ is a compact subset of $\pi (\mathbb{R}^d)$ and $\mathcal{K}(\pi (X))$ is compact and so we may assume, by taking a subsequence if necessary, that  $(\pi \circ T_k \circ \pi^{-1}) (\pi(F)) \cap \pi(X)$ converges to a compact set $E' \subseteq \pi(X)$ in $d_\mathcal{H}$ as $k \to \infty$.  In particular, $E'$ is a weak tangent to $\pi(F)$ and it follows from (\ref{subsetgood}) that $E' \supseteq \pi(E)$.

Theorem  \ref{orthoproj} now follows immediately.  We demonstrated above that for \emph{all} $\pi \in G_{d,k}$, the set $\pi(E)$ is a subset of a weak tangent to $\pi(F)$.  It therefore follows from Marstrand's classical projection theorem for Hausdorff dimension that  for \emph{almost all}  $\pi \in G_{d,k}$ we have
\[
\dim_\mathrm{A} \pi(F)  \geq  \dim_\mathrm{A} \pi(E) \geq \dim_\mathrm{H} \pi(E) = \min\{k, \dim_\mathrm{H} E\} = \min\{k, \dim_\mathrm{A} F\}
\]
as required.

\subsubsection{Proof of Theorem \ref{orthoprojexceptions}: dimension of exceptions} \label{orthoprojexceptionsproof}

Theorem \ref{orthoprojexceptions} follows by combining the argument of the previous section with known estimates for the Hausdorff dimension of the set of  exceptions to Marstrand's classical projection theorem for Hausdorff dimension.  In particular, let $E$ be as before and recall that $\dim_{\mathrm{H}} E = \dim_{\mathrm{A}} F$ and for \emph{all} $\pi \in G_{d,k}$ we have $\dim_\mathrm{A} \pi(F)  \geq  \dim_\mathrm{A} \pi(E) \geq  \dim_\mathrm{H} \pi(E)$.   Therefore, for any $s \geq 0$, we have
\begin{eqnarray*}
\dim_{\mathrm{H}}  \left\{ \pi \in G_{d,k} \, : \,  \dim_\mathrm{A}  \pi ( F) < s \right\}  &\leq & \dim_{\mathrm{H}}  \left\{ \pi \in G_{d,k} \, : \,  \dim_\mathrm{A}  \pi (E) < s \right\} \\  
& \leq &  \sup_{\substack{E'  \,  \in  \, \mathcal{K}(  \mathbb{R}^d ) : \\
\dim_{\mathrm{H}}E' \, = \,  \dim_{\mathrm{A}} F}} \dim_{\mathrm{H}}  \left\{ \pi \in G_{d,k} \, : \,  \dim_\mathrm{A}  \pi (E') < s \right\}
\end{eqnarray*}
as required.    Turning attention to the quantitative statement,  suppose $0<s \leq \min\{k, \dim_\mathrm{A} F\} =  \min\{k, \dim_\mathrm{H} E\}$.  Applying  known bounds, which can be found in \cite[Corollary 5.12]{MattilaFourier} for example, yields
\begin{eqnarray*}
\dim_{\mathrm{H}}  \left\{ \pi \in G_{d,k} \, : \,  \dim_\mathrm{A}  \pi ( F) < s \right\}  &\leq & \dim_{\mathrm{H}}  \left\{ \pi \in G_{d,k} \, : \,  \dim_\mathrm{H}  \pi (E) < s \right\} \\ 
& \leq & k(d-k) +s- \max\{k, \dim_\mathrm{H} E\}\\
& = & k(d-k) +s-\max\{k, \dim_\mathrm{A} F \}
\end{eqnarray*}
as required.

\subsubsection{Proof of Theorem \ref{restricted}: restricted families of projections} \label{restrictedproof}

This theorem is proved in a similar way to Theorem \ref{orthoproj}.  Let  $E$ be as before and recall that $\dim_{\mathrm{H}} E = \dim_{\mathrm{A}} F$ and for all $t \in P$ we have $\dim_\mathrm{A} \pi_t(F)  \geq  \dim_\mathrm{A} \pi_t(E)$.  It follows that for almost all $t \in P$ we have
\[
\dim_\mathrm{A} \pi_t(F)  \ \geq   \  \dim_\mathrm{A} \pi_t(E)  \ \geq  \  \underset{s \in P}{\mathrm{essinf}}  \,  \dim_\text{A} \pi_{s}  E \ \geq  \ \inf_{\substack{E  \,  \in  \, \mathcal{K}(  \mathbb{R}^d ) : \\
\dim_{\mathrm{H}}E \, = \,  \dim_{\mathrm{A}} F}} \ \underset{s \in P}{\mathrm{essinf}}  \,  \dim_\mathrm{A} \pi_s E
\]
which completes the proof.

\vspace{6mm}

\begin{centering}

\textbf{Acknowledgements}

The author is  supported by a \emph{Leverhulme Trust Research Fellowship} (RF-2016-500).  He thanks Antti K\"aenm\"aki, John Mackay, and Pablo Shmerkin for helpful discussions.  Finally, he thanks an anonymous referee for helpful comments on the exposition of the article.
\end{centering}

\vspace{5mm}

\noindent \emph{Jonathan M. Fraser\\
School of Mathematics and Statistics\\
The University of St Andrews\\
St Andrews, KY16 9SS, Scotland} \\ \\
\noindent  Email: jmf32@st-andrews.ac.uk

\end{document}